\documentclass[11pt,leqno]{amsart}
\usepackage{amssymb,amsmath,amsthm,amsfonts}
\usepackage{graphicx}
\usepackage{float}
\usepackage{xcolor}
\usepackage{enumitem}
\usepackage[normalem]{ulem}
\usepackage{esint}

\DeclareMathOperator{\diam}{diam}
\DeclareMathOperator{\dist}{dist}

\newcommand{\N}{\mathbb{N}} 
\newcommand{\Z}{\mathbb{Z}}   
\newcommand{\R}{\mathbb{R}}

\theoremstyle{plain}

\newtheorem{Thm}{Theorem}
\newtheorem{theoremA}{Theorem}
\newtheorem{Cor}[Thm]{Corollary}
\newtheorem{Lem}[Thm]{Lemma}

\newtheorem{Prop}[Thm]{Proposition}

\newtheorem{Rem}[Thm]{Remark}

\numberwithin{Def}{section} 
\numberwithin{Thm}{section} \numberwithin{equation}{section}

\title{On the Assouad dimension of  Weierstrass function graphs}

\begin{document}
\author[Efstathios-K. Chrontsios-Garitsis]{Efstathios-K. Chrontsios-Garitsis}
\address{Department of Mathematics \\  The Ohio State University, Columbus, OH}
\email{chrontsios.1@osu.edu, echronts@gmail.com}
\subjclass[2020]{Primary 28A80; Secondary 31E05 }

\begin{abstract}
The class of Weierstrass functions $W_{a,b}$ is one of the first class of examples of continuous and nowhere differentiable real functions. 
A challenging line of research has been to determine the various dimensions of graphs $G(W_{a,b})$ of such functions. For instance, the Hausdorff dimension of $G(W_{a,b})$ was only recently determined by Shen in 2018 \cite{ShenWeier}, after a long series of partial results by many different authors. While the Assouad dimension of $G(W_{a,b})$ remains an open problem, also posed as a question by Fraser in 2020 \cite{FraserBook}, there have been many indications that it might be equal to $2$. Such indications include the graph of Wiener processes \cite{Howroyd_Yu_Wiener_Process}, the graphs of almost all H\"older functions in the Baire category sense \cite{FengFraser_Holder_typical}, and the graphs of Weierstrass functions after a series of countably many reflections \cite{ChronT_spec_holder} all having Assouad dimension equal to $2$. In this paper we show that this is not the case, providing a quantitative upper bound on the Assouad dimension of $G(W_{a,b})$ that is strictly less than $2$. In particular, we show that such a bound is true for a class of generalized Weierstrass functions $W_{a,b}^\phi(x)
=
\sum_{j=0}^\infty a^j\phi(b^j x)$, which includes $W_{a,b}$ and the class of Takagi functions. The latter fact is used to also answer in the negative a conjecture of Yu \cite{Yu_Takagi} on the Assouad dimension of graphs of Takagi functions for parameters $a\in (0,1)$, $b\in (1/a, \infty)\cap \Z$.
\end{abstract}

\maketitle

\section{Introduction}

For quite some time, mathematicians were under the impression that a continuous  function may be non-differentiable, but only at a relatively small subset of its domain. One of the first examples to disprove this was given by Weierstrass during his 1872 lecture at the Royal Prussian Academy of Science in Berlin (see for instance \cite{duBoisReymond1875_firstWeier,HardyWeier}). In fact, Weierstrass provided a whole class of such continuous nowhere differentiable examples for a range of parameters $a,b$. Given $a\in (0,1)$ and integer $b\in (1/a,\infty)$, the $1$-periodic Weierstrass function $W_{a,b}:\R\to \R$ is defined to be
	$$
	W_{a,b}(x):=\sum_{j=0}^{\infty} a^j \cos (2\pi b^j x), \qquad 0 \le x \le 1.
	$$ 
	Another class of continuous nowhere differentiable functions is that   introduced by Takagi in 1903 \cite{Takagi_first} (see also \cite{Takagi_survey} for a detailed survey). Given $a,b$ as above, the $1$-periodic Takagi function $T_{a,b}:\R\to \R$ is defined to be
	$$
	T_{a,b}(x)=\sum_{j=0}^{\infty}a^j D(b^j x), \qquad 0 \le x \le 1,
	$$ 
	where $D$ is the $1$-periodic sawtooth function with $D(t)=t$ for all $t\in [0,1/2]$ and $D(t)=1-t$ for all $t\in [1/2,1]$. While at first such functions were considered to be a technical pathology, it was later proved by Banach in 1931 that ``most'' continuous functions, in the Baire category sense, are in fact nowhere differentiable \cite{banach1931baire}. This celebrated result brought these classes of continuous nowhere differentiable functions to the center of attention within many areas of math. A few recent examples include the study of these functions within the areas of partial differential equations~\cite{Appl_EllipticPDESobolev, Appl_WaveEquationsRoughCoefficients}, stochastic calculus \cite{Appl_StochasticCalculus}, harmonic analysis \cite{Appl_HarmonicRealAnalysis}, potential theory \cite{Appl_PotentialTheory}, approximation theory \cite{Appl_ApproximationTheory}, probability \cite{RomanowskaRandomizedWeierstrass}, number theory \cite{AJChron_mobius}, and dynamical systems \cite{Appl_DynamicalSystems}.

    \begin{figure}[t]
    \centering
    \includegraphics[height=0.55\textheight, width=0.99\textwidth]{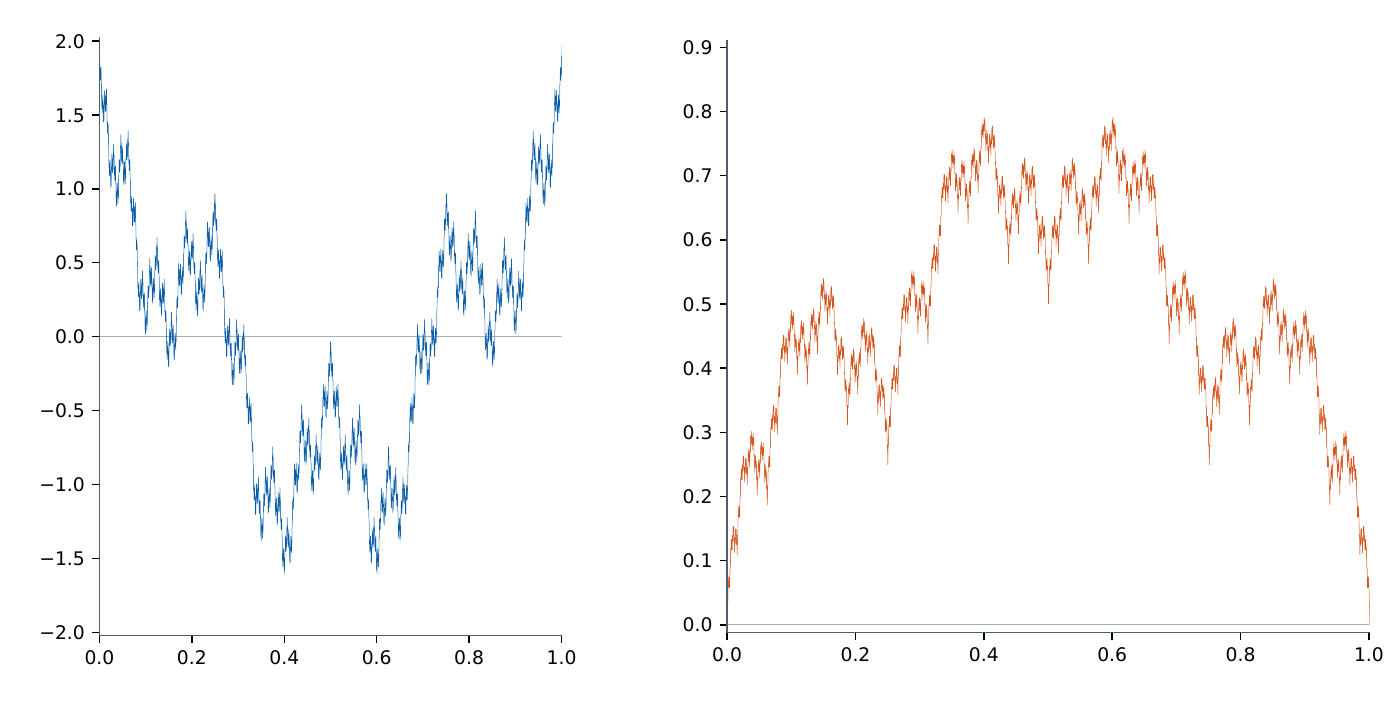}
    \caption{The classical Weierstrass and Takagi function graphs  with parameters $a=1/2$ and $b=4$.}
    \label{fig:weierstrass-takagi}
\end{figure}
    
    One particular object of interest relating to the aforementioned classes of functions $W_{a,b}$ and $T_{a,b}$ is the study of the corresponding function graphs
    $$
    G(W_{a,b}):=\{ (x, W_{a,b}(x)): \,x\in [0,1] \},
    $$ and similarly for $G(T_{a,b})$.
    Specifically, the graphs of these functions have received sustained attention within the fractal geometry, ergodic theory, and geometric measure theory communities for many years (see for instance \cite{FalcBook, FraserBook} for detailed expositions and references). A particularly challenging and longstanding program of research focuses on the determination of metric dimensions of the graphs of the Weierstrass functions. While the upper box dimension of $G(W_{a,b})$ has been known since the early 20th century (see \cite{HardyWeier}), its Hausdorff dimension was not conclusively determined until 2018 by Shen \cite{ShenWeier}, after a long list of partial results proved by many authors throughout many years (for instance see \cite{Hausd1934, Hausd1984, Hausd1989, Hausd2011, Hausd2014} for a non-exhaustive list). Determining exactly the Hausdorff dimension of  $G(W_{a,b})$ led to the celebrated dichotomy of Weierstrass functions based on their analytic properties and Hausdorff dimension, proved by Ren and Shen in their \textit{Inventiones Mathematicae} paper \cite{RenShenInventWeier}.

    A relatively new dimension notion, at least compared to the Hausdorff and the box-counting dimension, is that defined by Assouad in \cite{Assouad77} (see also \cite{Assouad79, Assouad83}). While the initial motivation for introducing the Assouad dimension was to study the embeddability of abstract metric spaces into Euclidean spaces, this dimension has been found to have surprising connections to topics in different areas. A non-exhaustive list of such instances includes the classification of spaces of homogeneous type (doubling) by the finiteness of the Assouad dimension \cite{hei:lectures}, a purely geometric criterion for admissibility of certain Hardy-type inequalities \cite{Lehr_Hardy_Aika_Assouad}, as well as one of the first recent breakthroughs towards resolving the Kakeya conjecture in $\R^3$ \cite{Assouad_Kakeya}, before it was announced by Wang and Zahl in \cite{Kakeya_Wang_Zahl_R3}. For more applications of the Assouad dimension within geometric measure theory, number theory, probability, and functional analysis we refer to the recent textbook of J.~M.~Fraser \cite{FraserBook}.

    Given the extensive interest on the various metric dimensions of Weierstrass function graphs, it is natural to ask what can be said about the Assouad dimension of $G(W_{a,b})$ and $G(T_{a,b})$. This is currently an open question posed by Fraser in \cite[Question 17.11.1]{FraserBook}, with partial progress in the case of $G(T_{a,b})$ for Takagi functions, and limited to no progress for the case of $G(W_{a,b})$ of Weierstrass functions. In particular, Yu  shows in \cite{Yu_Takagi} that for $b>3$ and certain parameters $a$, the Assouad dimension of $G(T_{a,b})$ exceeds the respective Hausdorff dimension, conjecturing that the former is equal to $2$ for all parameters $a\in (0,1)$ and $b\in (1/a,\infty)\cap \N$. Later, for $b=2$ and $a\in (1/2,1)$, Anttila, B\'ar\'any and K\"aenm\"aki \cite{TakagiDimA} give a precise formula for the Assouad dimension of $G(T_{a,2})$ that depends on the Hausdorff dimension of all slices of the graph, showing it is in fact strictly less than $2$, which disproves Yu's conjecture in this range of parameters. 
    
    Regarding the Assouad dimension of $G(W_{a,b})$, there are currently no non-trivial lower or upper bounds. However, Howroyd and Yu in \cite{Howroyd_Yu_Wiener_Process} show that the Assouad dimension of the graph of a Wiener process is almost surely equal to $2$, which naturally leads to the impression that Weierstrass functions should also have graphs of Assouad dimension equal to $2$, as natural deterministic representatives of these random functions. Moreover, Tyson and the author in \cite{ChronT_spec_holder} show that after countably many reflections of any given Weierstrass function graph, the resulting set is the graph of a H\"older function with the same exponent and of Assouad dimension equal to $2$. In addition, Feng and Fraser show in \cite{FengFraser_Holder_typical} that typical H\"older functions, in the Baire category sense, have graphs of Assouad dimension equal to $2$, further reinforcing the idea that the Assouad dimension of $G(W_{a,b})$ should also be $2$. Against all the aforementioned indications, the first main result of this paper establishes that the Assouad dimension of $G(W_{a,b})$ is strictly less than $2$ for all $a\in (0,1)$ and integers $b> a^{-1}$.

    \begin{Thm}
    \label{Thm:Aikawa-Weierstrass}
    Suppose $a\in (0,1)$ and $b\in\mathbb N$ is such that $ab>1$. Then
    $\dim_A G(W_{a,b})<2$.
    \end{Thm}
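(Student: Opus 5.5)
The plan is to reduce the statement to a porosity property of $G(W_{a,b})$. I would use the standard characterization (Luukkainen): a set $E\subset\R^2$ has $\dim_A E<2$ if and only if $E$ is uniformly porous. That is, there is $c\in(0,1)$ such that for every $z\in E$ and every $0<r<1$ the ball $B(z,r)$ contains a ball of radius $cr$ disjoint from $E$. The same condition can be phrased as an Aikawa-type integrability condition, $\int_{B(z,r)}\dist(w,E)^{-\beta}\,dw\le Cr^{2-\beta}$ for some $\beta>0$. Either form suffices, and the rest of the argument consists of producing the hole of radius $cr$ with $c$ depending only on $a,b$. Throughout I write $\alpha=-\log a/\log b\in(0,1)$ and use the two basic facts that come from $b\in\N$. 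The first is self-affinity: $W_{a,b}(x)=\sum_{j<n}a^j\cos(2\pi b^jx)+a^nW_{a,b}(b^nx)$ for every $n$. The second is the H\"older bound $|W_{a,b}(x)-W_{a,b}(y)|\le C|x-y|^\alpha$.

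Fix $z=(x_0,W_{a,b}(x_0))$ and $r$. Let $m$ be the scale with $a^m\approx r$, and decompose $W_{a,b}=S_m+a^mW_{a,b}(b^m\cdot)$. Choosing the constant in $a^m\approx r$ appropriately, I can make the tail have sup norm at most $\eta r$ for a small fixed $\eta$. Thus, inside the box $[x_0-r,x_0+r]\times[W(x_0)-r,W(x_0)+r]$, the graph lies in the vertical $\eta r$-neighbourhood of the smooth curve $y=S_m(x)$. It therefore suffices to find a sub-box of size comparable to $r$, inside the big box, that avoids the $\eta r$-neighbourhood of $G(S_m)$. I would split the horizontal interval into columns of width $\delta r$ and argue in two cases.

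In the first case there is a column $J$ on which $\operatorname{osc}_J S_m\le r/4$. Then over $J$ the graph occupies a vertical band of height at most $r/4+2\eta r$. Since the graph passes near height $W(x_0)$ within the box, the complementary part of $J\times[W(x_0)-r,W(x_0)+r]$ contains a square of side about $\min(\delta r, r/4)$, which is the desired hole.

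In the second case every column of width $\delta r$ sees an oscillation of $S_m$ of at least $r/4$. Here I would exploit the low-frequency structure. The dominant terms of $S_m$ are the last few, $a^{m-k}\cos(2\pi b^{m-k}x)$ with $k$ bounded, and the earlier ones have much smaller derivatives. So I would look near the critical points of the top-frequency block. These occur at spacing $\approx b^{-m}$, and at them the derivative of $S_m$ is controlled by lower-order terms. I would show by a counting argument that a fixed positive proportion of the area of the box is missed, in a $\rho$-box count at some scale $\rho=\delta r$. Equivalently, I would bound the number of $\delta r$-squares meeting the graph inside the $r$-box by $(1-\kappa)\delta^{-2}$. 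Iterating this bound over scales via self-affinity, which maps the picture at scale $r$ to scale $1$ up to a shear with bounded distortion, gives $N(\rho)\le C(r/\rho)^{2-\epsilon}$, which is exactly $\dim_A<2$.

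The main obstacle is this second case. There the naive curvature bound is useless: $S_m''$ is of size $(ab^2)^m$, which is too big relative to $r^{-1}$ because $\alpha<1$. The key step is therefore a quantitative statement that the graph cannot fill a positive proportion of every $r\times r$ box at every scale. I would try first to prove it through the shear-normalized self-affine maps $(x,y)\mapsto\bigl((x+k)/b,\;ay+\cos(2\pi(x+k)/b)\bigr)$. Using their compositions, the configuration inside any $r$-box is a bounded-distortion image of a configuration from a compact family of ``model windows''. By compactness and continuity, porosity then reduces to showing that no model window is entirely filled by the graph. That last fact follows because $G(W_{a,b})$ has zero area: the graph of a continuous function has Lebesgue measure zero.
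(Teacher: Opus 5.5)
Your reduction to porosity is fine, and so is Case~1. But Case~1 does almost no work. For small $r$, every column of width $\delta r$ contains many periods of the terms of $S_m$ whose amplitude is comparable to $r$. So Case~2 is the generic situation, and it carries the whole theorem.

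There your argument has a genuine gap: the claim that every $r$-box is a bounded-distortion image of a window from a compact model family is false. The renormalizing maps contract horizontally by $1/b$ and vertically by $a>1/b$. Pull an $r\times r$ box with $r\approx a^m$ back through $m$ of them and remove the shear $S_m$. You do not get a unit window. You get a strip of height $\approx 1$ and length $\approx rb^m\approx (ab)^m$, lying around the curve $w=-a^{-m}\bigl(S_m(b^{-m}u)-W(x_0)\bigr)$. The distortion is of order $(ab)^m\to\infty$.

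Equivalently, similarity blow-ups of $G$ are horizontally compressed copies of ever longer stretches of the graph of $W$ plus a drift. Their limits need not be graphs, so the zero area of $G$ says nothing about them. Compactness can at best reformulate porosity as ``no weak tangent is a full ball'', which is exactly what has to be proved. No argument of the form ``continuous or self-affine graph, hence zero area, hence porous'' can work. The graphs of Wiener processes, of typical H\"older functions, and of the reflected Weierstrass functions cited in the introduction all have zero area and Assouad dimension $2$. So do suitable Lebesgue-null self-affine Bedford--McMullen carpets (Mackay's formula).

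The missing idea is a quantitative non-recurrence statement. At every renormalized scale, and uniformly in the low-frequency drift, a fixed proportion of sub-pieces must stay away from any given height. The paper gets this from the (PD) condition, in four steps.
\begin{enumerate}
\item On a level-$k$ $b$-adic interval rescaled to $[0,1]$, the level-$(k+L)$ truncation becomes $H=g+P_L$. Here the drift $g(t)=a^{-k}P_k((v+t)/b^k)$ is only known to be $\kappa$-Lipschitz, with $\kappa=2\pi/(ab-1)$. So one needs a block of frequencies with a $b$-adic secant steeper than $\kappa$.
\item For the cosine, $|P_n(b^{-n})-P_n(0)|\geq a^{n-1}(1-\cos(2\pi/b))>\kappa b^{-n}$ once $(ab)^n$ is large. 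Moreover, all terms of index $\geq n$ are invariant under translation by $b^{-n}$.
\item Hence the renormalized values at paired children (at a fixed offset) differ by a fixed margin $3\eta>0$. At most one child of each pair can come within $\eta/2$ of the renormalized height $a^{-k}y$, so a fixed fraction of children is discarded at every step.
\item Iterating bounds the number of level-$N$ intervals near any horizontal line by $Cb^{(1-\theta)(N-p)}$. Integrating over horizontal slices then gives the Aikawa estimate, and hence $\dim_A G\leq 2-\theta$.
\end{enumerate}
Your idea of examining critical points of the top-frequency block points in this direction, but a single block does not suffice in general. Across one period of the top term, the only available control on the lower terms is a variation of up to $\kappa a^{m-1}$. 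This exceeds the top oscillation $2a^{m-1}$ whenever $ab<\pi+1$. That is why a long block, with $n$ large, is needed.
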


    In fact, the above follows from a more general theorem, which also settles Yu's conjecture for most of the remaining parameters. Given $a\in (0,1)$, an integer $b\in (1/a,\infty)$, and a Lipschitz $1$-periodic function $\phi:\R\to \R$, define the $1$-periodic \textit{$\phi-$generalized Weierstrass} function by
    $$
    W_{a,b}^\phi(x)
    =
    \sum_{j=0}^\infty a^j\phi(b^j x),
    \qquad 0\le x \le 1.
    $$
    We say that $\phi$ is the \textit{generating function} of $W_{a,b}^\phi$. Note that for $\phi =D$, where $D$ is the $1$-periodic sawtooth function defined earlier, we have $W_{a,b}^\phi=T_{a,b}$. 
    
    Before we state our second main result for generalized Weierstrass functions, we need to record a condition that is required for the generating functions $\phi$ we consider. Given $a,b,\phi$ as above, and $m\in \N$, set
    $$
    P_m(x)
    =
    \sum_{j=0}^{m-1}a^j\phi(b^jx), \qquad x\in \R
    $$
    and
    $$
    \kappa_\phi
    =
    \frac{\operatorname{Lip}(\phi)}{ab-1},
    $$ where $\operatorname{Lip} \phi$ denotes the Lipschitz constant of $\phi$ (see Section \ref{sec: background}). We say that a Lipschitz function $\phi:\R\to \R$ satisfies a  \textit{$b$-adic positive difference (PD) condition}  with data $(n_\phi, k_\phi, t_0)$  if there
    are integers $n_\phi\geq1$ and $1\leq k_\phi<b^{n_\phi}$, and a point
    $t_0\in[0,1-k_\phi b^{-n_\phi}]$ such that
    $$
    \left|
     P_{n_\phi}(t_0+k_\phi b^{-n_\phi})
    - P_{n_\phi}(t_0)
    \right|
    >
    \kappa_\phi k_\phi b^{-n_\phi}.
    $$

    A few comments are in order regarding the intuition behind this condition. A $b$-adic (PD) condition  asserts that some finite truncation $P_{n_\phi}$ has a $b$-adic secant of slope greater than $\kappa_\phi$. Since every rescaled lower-frequency contribution occurring in the iteration has Lipschitz constant at most $\kappa_\phi$, this oscillation cannot be canceled by the lower-frequencies. Moreover, the periodicity of $\phi$ implies that every term of index at least $n_\phi$ is invariant under the chosen $b$-adic translation, so the same separation persists in longer truncations. 
    
    We are now ready to state our second main result for generalized Weierstrass functions.

    \begin{Thm}\label{Thm:Aikawa-phi-Weierstrass}
        Suppose $a\in (0,1)$ and $b\in\mathbb N$ is such that $ab>1$. If $\phi:\R\to \R$ is a Lipschitz, $1$-periodic function satisfying a $b$-adic (PD) condition, then
    $\dim_A G(W_{a,b}^\phi)<2$.
    \end{Thm}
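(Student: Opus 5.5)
The plan is to show that $G=G(W^\phi_{a,b})$ is \emph{porous} in $\R^2$: there is $c>0$ such that every square $Q$ of side $r\le 1$ centered on $G$ contains a square of side $cr$ disjoint from $G$. In $\R^2$, porosity forces $\dim_A<2$, a standard fact (see \cite{FraserBook}; equivalently, the graph satisfies an Aikawa-type condition), and the resulting bound depends only on $c$. So everything reduces to a uniform, scale-invariant hole in the graph at every location and scale. The tool is the self-affine structure $W(x)=P_m(x)+a^m W(b^m x)$. Write $\alpha=-\log a/\log b\in(0,1)$. At scale $b^{-m}$ the graph over a $b$-adic interval $I$ of length $b^{-m}$ is a copy of the whole graph, compressed by $b^{-m}$ horizontally and $a^m$ vertically, and then sheared by the smooth-ish term $P_m$. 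Since $|P_m'|\le \operatorname{Lip}(\phi)\sum_{j<m}(ab)^j\le \kappa_\phi (ab)^m$, the shear has slope $O(a^m b^m)$ relative to $I$. After rescaling by $b^{-m}$ horizontally and $a^m$ vertically, it becomes a function on $[0,1]$ with Lipschitz constant at most $\kappa_\phi$.

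The (PD) condition is used to produce the hole. Fix data $(n_\phi,k_\phi,t_0)$ and put $h=k_\phi b^{-n_\phi}$ and $\delta=|P_{n_\phi}(t_0+h)-P_{n_\phi}(t_0)|-\kappa_\phi h>0$. All terms of index $\ge n_\phi$ are invariant under translation by $h$, because $b^j h\in\Z$. Hence $W(t_0+h)-W(t_0)=P_{n_\phi}(t_0+h)-P_{n_\phi}(t_0)$. Now take a rescaled copy: $g(t)=a^{-m}\bigl(W(x_I+b^{-m}t)-P_m(x_I+b^{-m}t)\bigr)+a^{-m}\bigl(P_m(x_I+b^{-m}t)-P_m(x_I)\bigr)=W(t)+L(t)$ with $\operatorname{Lip}(L)\le\kappa_\phi$. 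Then $|g(t_0+h)-g(t_0)|\ge\delta$. The same holds at nearby points $t_0+s$ by continuity of $P_{n_\phi}$, uniformly in $m$ and $I$, because the only uniformity needed is the bound on $\operatorname{Lip}(L)$. This gives a definite ``jump'' of size $\ge\delta$ over a horizontal distance $h$, at a fixed relative position in every $b$-adic cell and in every rescaled picture.

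Turning the jump into an empty box in the original coordinates is the main obstacle. The cells are anisotropic: they have width $b^{-m}$ and height $\approx a^m\gg b^{-m}$. I would choose $m$ with $a^m\approx r$, so the graph over a cell of width $b^{-m}$ has vertical extent comparable to $r$. The jump then becomes a vertical displacement of size $\ge\delta a^m\approx\delta r$ between points at horizontal distance $hb^{-m}\ll r$. Such a displacement does not by itself create an empty square of side $\approx r$, because the graph over the many cells inside $Q$ fills things up.

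I would try the following instead. Inside $Q$, pass to a sub-square of side $\rho$ at a finer, adapted scale where the cell aspect ratio matches. Then use the (PD) jump to show that over a cell, $g$ cannot take all values in a vertical strip near $t_0$. Concretely, the image of $[t_0,t_0+h]$ under $g$ is an interval, but over a small neighbourhood of $t_0$ the graph is confined near $g(t_0)$, while over a small neighbourhood of $t_0+h$ it is confined near $g(t_0)\pm\delta$. This is by uniform continuity of $W$ (Hölder with exponent $\alpha$) and of $L$. That yields an empty rectangle of relative size $\eta\times\delta/2$ with $\eta$ depending only on $\delta$ and the Hölder constant. Mapped back, this is an empty rectangle of size $\eta b^{-m}\times \delta a^m/2$, which contains a square of side $\eta b^{-m}$.

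To get porosity at every scale $\rho$, I would pick $m$ with $b^{-m}\approx\rho$ and check that the whole graph over that cell lies within $O(a^m)$ of the cell, so the rectangle sits inside a square of side $\approx\rho$ near $G$. The worry is exactly whether a graph point lies at the center of $Q$. If this only gives porosity relative to thin rectangles, I would instead bound the $\rho$-covering number of $G\cap Q$ directly. Of the $b^{n}$ subcells at each level, at least one fixed proportion must miss an $\eta$-fraction of its vertical column. Iterating this gives $N_\rho(G\cap B(x,R))\le C(R/\rho)^{2-\epsilon}$, which is the desired bound $\dim_A G<2$.
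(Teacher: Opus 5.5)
\textbf{There is a genuine gap: the step that fails is the construction of the hole.}

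Your empty rectangle sits in a fixed height band in each cell, between $g(t_0)$ and $g(t_0+h)$ in rescaled coordinates. An empty rectangle directly above the graph near $t_0$ exists for any continuous function. The (PD) jump only guarantees that the graph crosses this band somewhere in $[t_0,t_0+h]$, so the hole is close to \emph{some} graph point of the cell. Porosity needs a hole in $B(z,\rho)$ for \emph{every} $z=(x_0,W(x_0))\in G$.

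At scale $\rho\approx b^{-m}$, the graph over the cell containing $x_0$ has height $\approx a^m$, and $a^m/b^{-m}=(ab)^m\to\infty$. So there is no scale at which the aspect ratio ``matches''. The ball $B(z,\rho)$ only sees a slab of height $\approx\rho$ around $y=W(x_0)$, while your rectangle typically lies at vertical distance $\approx a^m\gg\rho$ from it. Hence the claim that the rectangle sits inside a square of side $\approx\rho$ near $G$ is false, as you suspected.

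The fallback is not a proof either. A subcell cannot ``miss a fraction of its vertical column'', because by continuity the graph over any interval fills its whole vertical range. Sparsity has to be measured along horizontal slabs, uniformly in the height, and your sketch never produces such an estimate.

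\textbf{The missing idea is to use the jump as a dichotomy at the height of the given point.} This is exactly the pairing in Lemma~\ref{Lem:phi-L-level-uniform-good-interval-estimate}. Since $|H(\mu b^{-L})-H((\mu+q)b^{-L})|\ge 3\eta$, for every $y$ at least one of $Q_\mu$, $Q_{\mu+q}$ stays at distance $\gtrsim a^k$ from $y$.

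In your notation, set $x_1=x_I+b^{-m}t_0$ and $x_2=x_1+hb^{-m}$. Then
\[
|W(x_2)-W(x_1)|=a^m|g(t_0+h)-g(t_0)|\ge\delta a^m,
\]
so one of them satisfies $|W(x_i)-y|\ge\delta a^m/2$. Uniform continuity of $g=W+L$, with a modulus independent of $m$ and $I$, gives $|W-y|\ge\delta a^m/4$ on an interval of length $\tau b^{-m}$ inside $\overline I$ next to $x_i$. Because $(ab)^m\to\infty$, this slab contains $(y-\rho,y+\rho)$ for small $\rho$. That yields an empty square of side $\approx\tau\rho$ inside $B(z,C\rho)$, and Luukkainen's theorem then gives $\dim_A G<2$.

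Once repaired this way, your route is genuinely different from the paper's: a single-scale porosity argument followed by Luukkainen. The paper instead iterates the dichotomy over scales (Lemma~\ref{Lem:phi-uniform-good-interval-estimate}), integrates the resulting counts over horizontal slices, and uses the Aikawa characterization. That extra work is what yields the explicit bound $2-\theta_\phi$.
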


    In fact, we prove a quantitative version of Theorem  \ref{Thm:Aikawa-phi-Weierstrass}, which also implies a quantitative version of Theorem \ref{Thm:Aikawa-Weierstrass}(see Theorem \ref{Thm:Aikawa-Weierstrass QUANT}). This enables us to give an explicit upper bound depending solely on $a,b$ in the case of $W_{a,b}$ (see Theorem \ref{Thm:explicit-cosine-Assouad-bound}). As a result of Theorem  \ref{Thm:Aikawa-phi-Weierstrass}, we also show that Takagi functions   have Assouad dimension strictly less than $2$, answering in the negative Yu's conjecture from \cite{Yu_Takagi} in a wider range of parameters $a,b$ than in  \cite{TakagiDimA}.
    \begin{Thm}\label{Thm: Takagi Conj}
        Suppose $a\in (0,1)$ and $b\in \N$ is such that $ab>1$. Then $\dim_A G(T_{a,b})<2$.
    \end{Thm}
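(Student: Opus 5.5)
Theorem \ref{Thm: Takagi Conj} will be obtained from Theorem \ref{Thm:Aikawa-phi-Weierstrass} applied with $\phi = D$, the sawtooth function. Since $T_{a,b}=W^{D}_{a,b}$, and $D$ is $1$-periodic and Lipschitz with $\operatorname{Lip}(D)=1$, it only remains to check that $D$ satisfies a $b$-adic (PD) condition for every $a\in(0,1)$ and integer $b>1/a$. Here $\kappa_D = 1/(ab-1)$.

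The plan is to choose the data $(n_D,k_D,t_0)$ so that the secant slope of $P_{n_D}$ is large. A slope of $1$ from $D$ alone will not do when $ab<2$, because then $\kappa_D>1$.

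The main idea is to exploit the fact that all terms of $P_n$ can be made to increase simultaneously near $0$. On $[0, \tfrac12 b^{-(n-1)}]$ each $D(b^jx)$ with $j\le n-1$ equals $b^jx$, since $b^jx\le \tfrac12$. So on this interval
$$
P_n(x)=\sum_{j=0}^{n-1}(ab)^j x,
$$
which has slope $S_n=\frac{(ab)^n-1}{ab-1}$. We want $S_n>\kappa_D=\frac{1}{ab-1}$, i.e. $(ab)^n>2$. Since $ab>1$, this holds for $n$ large.

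We then need a $b$-adic secant lying inside this linear region. Take $t_0=0$, $n_D=n+1$ and $k_D=1$. The segment $[0,b^{-(n+1)}]$ lies in $[0,\tfrac12 b^{-(n-1)}]$, because $b^{-2}\le \tfrac12$ for $b\ge2$. The truncation is $P_{n+1}$, however, which includes the extra term $j=n$. On the segment, $D(b^nx)=b^nx$ since $b^n x\le 1/b\le 1/2$, so that term is also linear with slope $(ab)^n$. Hence the secant slope is $S_{n+1}=\frac{(ab)^{n+1}-1}{ab-1}$, and this exceeds $\kappa_D$ as soon as $(ab)^{n+1}>2$.

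The case $b=1$ cannot occur, since $ab>1$ and $a<1$ force $b\ge2$. So for any admissible $(a,b)$, fix $N$ with $(ab)^N>2$ and take $n_D=N$, $k_D=1$, $t_0=0$. Then $k_D<b^{N}$ and $t_0\in[0,1-b^{-N}]$. Also $b^{j}\cdot b^{-N}\le b^{-1}\le \tfrac12$ for $j\le N-1$, so every term is linear on the segment, and
$$
|P_N(b^{-N})-P_N(0)| = b^{-N}\frac{(ab)^N-1}{ab-1}>\kappa_D b^{-N}.
$$
This is exactly the (PD) condition, and Theorem \ref{Thm:Aikawa-phi-Weierstrass} then gives $\dim_A G(T_{a,b})<2$.

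There is no real obstacle here. The only point needing care is to keep every term in the increasing branch of $D$, which $b\ge 2$ guarantees. All the substantive work is contained in Theorem \ref{Thm:Aikawa-phi-Weierstrass}.
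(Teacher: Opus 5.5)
Your proposal is correct and is essentially the paper's argument. The paper also checks that $D=\dist(\cdot,\Z)$ satisfies a $b$-adic (PD) condition with data $(n,1,0)$ for large $n$, and then invokes Theorem~\ref{Thm:Aikawa-Weierstrass QUANT}; your observation that every term of $P_N$ is linear on $[0,b^{-N}]$, giving the exact secant slope $\frac{(ab)^N-1}{ab-1}$ and the clean threshold $(ab)^N>2$, supplies the detail the paper leaves as ``similarly''.
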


    For $b=2$ and $a\in (1/2,1)$, the methods employed by Anttila, B\'ar\'any and K\"aenm\"aki in \cite{TakagiDimA} rely on the dimension information of slices and the self-affine property of the graph $G(T_{a,2})$. While our approach is different, motivated by the importance of dimensions of slices exhibited in \cite{TakagiDimA}, as well as the general interest in the topic of dimensions of level sets (see for instance \cite{FraserBook} and the references therein), we also prove a quantitative bound on the Assouad dimension of all horizontal slices of $G(W_{a,b}^\phi)$.
    For
    $y\in\mathbb R$, we write
    $$
    \mathcal W_y
    =
    \{x\in[0,1]:W_{a,b}^\phi(x)=y\}
    $$ for the horizontal $y$-slice (also referred to as $y$-level set) of $W_{a,b}^\phi$.
    
    \begin{Thm}
    \label{Thm:uniform-level-set-estimate-QUAL}
    Suppose $a\in (0,1)$ and $b\in\mathbb N$ is such that $ab>1$. If $\phi:\R\to \R$ is a Lipschitz, $1$-periodic function with a $b$-adic (PD) condition, then $\dim_A \mathcal{W}_y<1$.
    \end{Thm}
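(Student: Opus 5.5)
We prove Theorem~\ref{Thm:uniform-level-set-estimate-QUAL} by showing a uniform \emph{porosity} property of the level sets $\mathcal W_y$ on all scales. Specifically, we produce a constant $c\in(0,1)$, depending only on $a,b,\phi$ and the (PD) data $(n_\phi,k_\phi,t_0)$, such that for every $y$, every $x\in\mathcal W_y$ and every $0<r<1$ the interval $(x-r,x+r)$ contains a subinterval of length $cr$ disjoint from $\mathcal W_y$. A uniformly porous subset of $\R$ has Assouad dimension strictly less than $1$, with an explicit bound of the form $1-C/\log(1/c)$, so this suffices.

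The first ingredient is the self-affine decomposition on $b$-adic intervals. Fix a level $m$ and a $b$-adic interval $I=[\ell b^{-m},(\ell+1)b^{-m}]$. For $x=(\ell+s)b^{-m}$ we write
\[
W_{a,b}^\phi(x)=P_m(x)+a^m W_{a,b}^\phi(\ell+s)=P_m(x)+a^m W_{a,b}^\phi(s),
\]
using that $b^m\ell\in\Z$ and $1$-periodicity. The term $P_m$ has Lipschitz constant at most $\sum_{j<m}a^jb^j\operatorname{Lip}\phi\le \kappa_\phi (ab)^m$. Hence on $I$ it varies by at most $\kappa_\phi a^m$ across any sub-secant of relative length $\theta$, namely by at most $\kappa_\phi a^m\theta$. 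Now apply the same decomposition inside $W^\phi_{a,b}(s)$ at depth $n_\phi$, and use that every term of index $\ge n_\phi$ is invariant under $s\mapsto s+k_\phi b^{-n_\phi}$. The (PD) condition then gives a gap
\[
\delta:=|P_{n_\phi}(t_0+k_\phi b^{-n_\phi})-P_{n_\phi}(t_0)|-\kappa_\phi k_\phi b^{-n_\phi}>0 .
\]
Rescaling, the two points $x_1=(\ell+t_0)b^{-m}$ and $x_2=x_1+k_\phi b^{-m-n_\phi}$ in $I$ satisfy $|W(x_1)-W(x_2)|\ge a^m\delta$. This holds uniformly in $\ell$ and $m$, because the low-frequency part changes by at most $\kappa_\phi a^m k_\phi b^{-n_\phi}$.

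The second step converts this oscillation into a gap of $\mathcal W_y$ inside $I$. Since $W^\phi_{a,b}$ is $\alpha$-Hölder with $\alpha=-\log a/\log b$, with a uniform constant $H$, we take a small $\rho=\rho(\delta,H)$. Then on the intervals $J_i=B(x_i,\rho b^{-m})$ the function stays within $a^m\delta/4$ of $W(x_i)$, because $H(\rho b^{-m})^\alpha=H\rho^\alpha a^m$. Consequently, for any $y$, at least one of $J_1,J_2$ satisfies $|W-y|\ge a^m\delta/4$ throughout, so it misses $\mathcal W_y$. Thus every $b$-adic interval of length $b^{-m}$ contains a hole of $\mathcal W_y$ of length $2\rho b^{-m}$. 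Given $x,r$, we choose $m$ with $b^{-m}\le r/2<b^{-m+1}$. Some $b$-adic interval of level $m$ containing $x$ or adjacent to it lies in $(x-r,x+r)$, which yields porosity with $c\approx\rho/b$. We then conclude by the standard porosity bound. Concretely, cover $B(x,R)\cap\mathcal W_y$ by $b$-adic intervals at levels $m_0<m_0+1<\dots$; each level-$m$ interval meeting the set has at most $b^{N}-1$ level-$(m+N)$ children meeting it, for a fixed $N$ with $b^{-N}\le\rho$. Iterating gives $N_r(B(x,R)\cap \mathcal W_y)\lesssim (R/r)^{\log(b^N-1)/(N\log b)}$, and the exponent is $<1$.

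The main obstacle is the uniform gap step. We must verify that the lower-frequency contribution really cannot cancel the (PD) secant on every $b$-adic interval, and that the Hölder-type control of $W$ near $x_1,x_2$ is uniform in $m$. The first point requires tracking the Lipschitz estimate for $P_m$ correctly, since the bound $\kappa_\phi(ab)^m$ uses $ab>1$ via the geometric sum $\sum_{j<m}(ab)^j\le (ab)^m/(ab-1)$. The second point uses the same self-affine splitting together with boundedness of $W$, rather than any finer regularity.
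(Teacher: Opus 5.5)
Your argument is correct, but it takes a different route from the paper.

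\textbf{The paper's route.} The paper does not go through porosity first. It observes that every level-$n$ $b$-adic interval meeting $\mathcal W_y$ is an $(A,y)$-Aikawa interval with $A=B_\phi/(1-a)$. The counting Lemma~\ref{Lem:phi-uniform-good-interval-estimate}, built for the graph theorem, bounds the number of such intervals inside any level-$p$ interval by $C_0b^{(1-\theta_\phi)(n-p)}$. This is directly the covering estimate $N_r\le C(R/r)^{1-\theta_\phi}$, and porosity is then deduced from Luukkainen's theorem. The counting lemma works with the truncations $P_n$ and absorbs tail errors through the choices of $I_\phi$, $L$, $M_A$. Crucially, it uses a whole interval $I_\phi$ of base points and the disjoint index pairs $\{\mu,\mu+q\}$, so a fixed proportion $|I_\phi|/2$ of the $b^L$ children is excluded at each step.

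\textbf{Your route.} You use the exact identity $W(x)=P_m(x)+a^mW(b^mx)$ and the exact invariance of the tail under $s\mapsto s+k_\phi b^{-n_\phi}$. So the gap $|W(x_2)-W(x_1)|\ge a^m\delta$ holds with no error terms at all. H\"older continuity of $W$ then converts this single gap into one hole of relative size $\rho$ in every $b$-adic interval.

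\textbf{Comparison.} Your argument is more elementary and self-contained: no Aikawa intervals, no $I_\phi$, $L$ or $M_A$. The cost is that it removes only one child out of $b^N$. That gives
\[
\dim_A\mathcal W_y\le 1-\frac{-\log(1-b^{-N})}{N\log b},
\]
a gain of order $b^{-N}/(N\log b)$. The paper's gain is $\theta_\phi=-\log(1-|I_\phi|/2)/(L\log b)$, where $|I_\phi|$ is fixed before $L$ is chosen. Your $N$ comes from the same kind of condition as the paper's $L$, so your bound is exponentially weaker. Running your two-point argument for all base points in a neighbourhood of $t_0$, as the paper does with $I_\phi$, would recover a positive proportion of holes. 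Also, your one-dimensional hole argument is specific to slices, whereas the paper's counting lemma serves both the graph and the slices.

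\textbf{Bookkeeping point.} If $t_0=0$ or $t_0+k_\phi b^{-n_\phi}=1$, then $J_1$ or $J_2$ sticks out of the $b$-adic interval. This happens, for example, for the data $(n,1,0)$ used for the cosine and sawtooth functions. In that case only a hole of length $\rho b^{-m}$ is guaranteed inside the interval, not $2\rho b^{-m}$. Taking $\rho\le1$ and $b^{-N}\le\rho/2$ handles this.
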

    
    Our approach to prove Theorem \ref{Thm:Aikawa-phi-Weierstrass} relies on the equivalence of the Assouad dimension with the Aikawa dimension. The latter was first introduced by Aikawa in \cite{Aikawa_Riesz_capacity} for once again a different purpose. In particular, Aikawa's purpose for introducing this quantity was to study the Whitney quasiadditivity of the Riesz capacity of different sets $E\subset \R^n$. 
    However, it was later proved by Lehrb\"ack and Tuominen in \cite{LehrTuomAikawaAssouad_ORIG} that for subsets of  Euclidean spaces, this notion coincides with the Assouad dimension. This allows for counting arguments and cardinality bounds on relevant ``Aikawa intervals'', whose usefulness would not be as intuitive if we restricted our attention to the classical Assouad dimension definition.  We refer to Section \ref{sec: background} for rigorous definitions.

   The above theorems lead to several geometric and analytic implications for the corresponding Weierstrass and Takagi function graphs. Namely, a notion very closely related to the Assouad dimension is that of porosity, in the following sense. A set $E\subset \R^d$ is said to be \textit{porous} if there is $c\in (0,1)$ such that for every $x\in E$ and every $r\in (0,\diam E)$, the ball $B(x,r)$ contains a ball of radius $cr$ that does not intersect $E$. This notion was associated to the Assouad dimension by the result of Luukkainen \cite{Luukkainen_porous}. Namely, he proves that a non-empty set $E\subset \R^d$ is porous if, and only if, $\dim_A E<d$. This yields the following corollary as a direct consequence of Theorem \ref{Thm:Aikawa-Weierstrass}. 

 \begin{Cor}\label{cor: graph porous}
        For every $a\in (0,1)$ and integer $b\in (a^{-1},\infty)$, the graph $G(W_{a,b})$ is porous.
    \end{Cor}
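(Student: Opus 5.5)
The corollary should follow at once from Theorem \ref{Thm:Aikawa-Weierstrass} combined with Luukkainen's characterization of porosity \cite{Luukkainen_porous}: a non-empty set $E\subset\R^d$ is porous if, and only if, $\dim_A E<d$. We would apply this with $d=2$ and $E=G(W_{a,b})$. So the plan is to check the hypotheses of both results and then chain them together.

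First we check the hypotheses. Fix $a\in(0,1)$ and an integer $b\in(a^{-1},\infty)$. Then $b\in\N$ and $ab>1$, which are exactly the hypotheses of Theorem \ref{Thm:Aikawa-Weierstrass}. That theorem gives $\dim_A G(W_{a,b})<2$. The graph is non-empty, since it contains $(0,W_{a,b}(0))$. Also $W_{a,b}$ is continuous, being a uniformly convergent series of continuous functions, so the graph is a compact set with positive, finite diameter.

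The one point needing care is that the definition of porosity used here asks for holes in $B(x,r)$ only for $r\in(0,\diam E)$. Luukkainen's theorem is stated for the standard notion of porosity, so we must make sure the two notions agree. Porosity at all small scales $r<r_0$ with constant $c$ already gives porosity for $r<\diam E$, at the cost of shrinking $c$ by the factor $r_0/\diam E$. Indeed, for $r_0\le r<\diam E$, the ball $B(x,r)\supset B(x,r_0/2)$ contains a hole of radius $c r_0/2 \ge (c r_0/(2\diam E))\, r$.

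With the two notions matched, Luukkainen's theorem applied to $E=G(W_{a,b})\subset\R^2$ yields that $G(W_{a,b})$ is porous. There is no substantial obstacle: the whole difficulty sits in Theorem \ref{Thm:Aikawa-Weierstrass}, and the only step needing attention is the routine matching of the scale range in the porosity definition just described.
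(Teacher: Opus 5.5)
Your proposal is correct and follows the paper's argument: the corollary is obtained there directly by combining Theorem \ref{Thm:Aikawa-Weierstrass}, which gives $\dim_A G(W_{a,b})<2$, with Luukkainen's characterization that a non-empty $E\subset\R^d$ is porous if and only if $\dim_A E<d$. Your remark matching the scale ranges of the two porosity definitions is valid. The paper leaves this detail implicit.
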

    
    The above geometric property of the graph of the classical Weierstrass function reveals various  analytic characteristics of the function. In particular, it establishes connections of $W_{a,b}$ with certain Muckenhoupt weights and  Hardy-type inequalities that the graph $G(W_{a,b})$ admits. We refer to Section \ref{sec: Final Rem} for a more detailed discussion and precise statements.

    The paper is organized as follows. In Section~\ref{sec: background} we give the required rigorous definitions and fix the notation we follow in later sections. In Section~\ref{sec: counting intervals} we employ certain implications of the (PD) condition to count the relevant Aikawa intervals that we need for estimates of relevant integrals, and establish certain cardinality bounds. In Section~\ref{sec: dimA general W} we make use of the aforementioned cardinality bounds to prove the necessary upper bound on powers of distance integrals that lead to upper bound for the Aikawa dimension and, as a result, for the Assouad dimension of $G(W_{a,b}^\phi)$, proving a quantitative version of Theorem~\ref{Thm:Aikawa-phi-Weierstrass}. In Section~\ref{sec: dim level sets} we again use the cardinality estimates, in order to establish the relevant upper bounds on the Assouad dimension and, thus, the porosity of all horizontal  slices of $W_{a,b}^\phi$, proving Theorem~\ref{Thm:uniform-level-set-estimate-QUAL}. In Section~\ref{sec: dimA cosine W} we show that the cosine and sawtooth generating functions satisfy a sufficient condition for a $b$-adic (PD) condition, proving Theorems~\ref {Thm:Aikawa-Weierstrass} and \ref{Thm: Takagi Conj}. In the same section, we also give a constructive proof resulting in explicit upper bounds, and an easier to check sufficient condition for a generating function $\phi$ to satisfy a (PD) condition. In Section \ref{sec: Final Rem} we discuss further potential improvements of our results in certain cases, and various analytic implications of Corollary~\ref{cor: graph porous}.
    
\subsection*{Acknowledgments:} The author wishes to thank Roope Anttila, Sascha Troscheit, and Jeremy Tyson for the very interesting discussions on the topic. The author also thanks Qiyuan Gu for communicating the argument that extends the result for the classical Weierstrass and Takagi functions from the parameters of range $ab\geq \pi+1$ and $ab\geq 2$ to $ab>1$.

\section{Background and notation}\label{sec: background}

\subsection{Dimension notions}

We start this section by introducing useful notation. Given a collection $\mathcal V$ of subsets of $\R^d$, $d\in \N$, we denote by
$\#\mathcal V$ the cardinality of the collection $\mathcal V$. Given a non-empty set $J\subset \R$ we denote the $1$-Lebesgue measure of $J$ by $|J|$ for convenience. Given a non-empty set $E\subset\R^d$, $d\geq 2$, we denote by $\mathcal L^d$ the $d$-dimensional Lebesgue measure of $E$, and by $\diam E$ the diameter of $E$ under the Euclidean metric of $\R^d$, i.e.,
$$
\diam E= \sup\{|x-y|:\, x,y \in E\}.
$$ Given two non-empty sets $E,F\subset \R^d$, we  write
$$
\dist(E,F)=\inf \{ |x-y|: \, x\in E, y\in F \}.
$$ If $E=\{x\}$, then we write $\dist(x,F)=\dist(\{x\},F)$. Moreover, for $x\in \R^d$ and $r>0$, we denote by $B(x,r)$ the open Euclidean ball in $\R^d$ of center $x$ and radius $r$.

We now recall various relevant definitions. For a non-empty $E\subset \R^d$, the \textit{Assouad dimension} of $E$ is defined to be
$$
\dim_A E = \inf \left\{s>0 \,: \genfrac{}{}{0pt}{}{\exists\,C>0\ \text{s.t. } N_r(B(x,R)\cap E)\le C (R/r)^{s}}{\text{for all }0<r\le R\text{ and all }x \in E} \right\},
$$ where $N_r(B(x,R)\cap E)$ denotes the smallest number of sets of diameter at most $r$ needed to cover $B(x,R)\cap E$.

We recall the definition of the Aikawa dimension. Given a non-empty $E\subset \R^d$, the \textit{Aikawa dimension} of $E$, denoted by $\dim_{\mathrm{Ai}}E$, is the infimum of all $s>0$ for which there is $C_s>0$ such that
$$
\int_{B(z_0,R)} \dist(z, E)^{s-d}\,dz\leq C_s R^s,
$$ for all $z_0\in E$ and $R\in (0,\diam E)$. The following theorem of Lehrb\"ack-Tuominen \cite{LehrTuomAikawaAssouad_ORIG} is what motivated our current approach.
\begin{theoremA}[{\cite[Theorem 1.1]{LehrTuomAikawaAssouad_ORIG}}]
\label{Thm: Aikawa-Assouad-lehr}
Let $E\subset\mathbb R^d$ be a nonempty  set. Then $\dim_{\mathrm{Ai}}E
=
\dim_A E$. In particular,
for $\epsilon\in (0,1)$ and $d=2$, 
if there is $C_\epsilon>0$ such that
\begin{equation}
\label{eq:Aikawa-condition}
\int_{B(z_0,R)}
\operatorname{dist}(z,E)^{-\epsilon}\,dz
\leq
C_\epsilon R^{2-\epsilon},
\end{equation}
for every $z_0\in E$ and every $0<R<\operatorname{diam}E$, then
the Assouad dimension of $E$ is at most $2-\epsilon$.
\end{theoremA}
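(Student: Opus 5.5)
The plan is to prove the two inequalities $\dim_A E\le \dim_{\mathrm{Ai}}E$ and $\dim_{\mathrm{Ai}}E\le \dim_A E$ separately. In both directions the Aikawa integral is compared with covering numbers of $E$ by a layer-cake/packing argument. The ``in particular'' statement is exactly the first inequality with $d=2$ and $s=2-\epsilon$, so I treat that direction in detail.

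For $\dim_A E\le \dim_{\mathrm{Ai}}E$, fix $s$ such that $\int_{B(z_0,R)}\dist(z,E)^{s-d}\,dz\le C_sR^s$ for all $z_0\in E$ and $0<R<\diam E$. We may take $s\le d$, since $\dim_A E\le d$ always; if $s\le d$ fails to be admissible the inequality is trivial. Let $x\in E$ and $0<r\le R$, and let $\{x_i\}_{i=1}^N$ be a maximal $r$-separated subset of $E\cap B(x,R)$. Then $N_{2r}(E\cap B(x,R))\le N$. The balls $B(x_i,r/2)$ are pairwise disjoint and contained in $B(x,2R)$. Every $z\in B(x_i,r/2)$ satisfies $\dist(z,E)\le |z-x_i|<r$. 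Since $s-d\le 0$, this gives $\dist(z,E)^{s-d}\ge r^{s-d}$ on each such ball. Integrating over the disjoint balls,
\[
N\,c_d\,r^{d}\,r^{s-d}\le \int_{B(x,2R)}\dist(z,E)^{s-d}\,dz\le C_s(2R)^s ,
\]
whenever $2R<\diam E$. Hence $N\lesssim (R/r)^s$, and so $\dim_A E\le s$.

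The only remaining care is the range $2R\ge \diam E$ (including unbounded or degenerate cases). There $B(x,R)\cap E=E\cap B(x,\diam E)$, and I would cover $E$ by boundedly many balls $B(x_j,\diam E/4)$ centred in $E$. This is possible because $E$ is doubling by the estimate already proven at scale $\diam E/4$. Applying the previous bound to each ball costs only a constant. For $d=2$ and $s=2-\epsilon$ this yields the stated bound $\dim_A E\le 2-\epsilon$ from \eqref{eq:Aikawa-condition}.

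For $\dim_{\mathrm{Ai}}E\le \dim_A E$, take $\dim_A E<t<s$ and use $N_\rho(E\cap B(z_0,2R))\le C(R/\rho)^t$. Split $B(z_0,R)$ into the set where $\dist(z,E)\ge R/2$ and the layers
\[
A_k=\{z\in B(z_0,R):2^{-k-1}R\le \dist(z,E)<2^{-k}R\},\qquad k\ge 1 .
\]
Each point of $A_k$ lies within $2^{-k}R$ of a point of $E\cap B(z_0,2R)$. Thus $A_k$ is covered by $\lesssim 2^{kt}$ balls of radius $\sim 2^{-k}R$, giving $|A_k|\lesssim 2^{kt}(2^{-k}R)^d$. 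On $A_k$ the integrand is $\lesssim (2^{-k}R)^{s-d}$; when $s<d$ this uses the lower bound on $\dist(z,E)$, and when $s\ge d$ the upper bound. The first region contributes $\lesssim R^d R^{s-d}$. Summing gives
\[
\int_{B(z_0,R)}\dist(z,E)^{s-d}\,dz\lesssim R^s\Big(1+\sum_{k}2^{k(t-s)}\Big)\lesssim R^s .
\]

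One technical point is that $\dist(z,E)=0$ on $\overline E$. This requires $|\overline E|=0$ when $s<d$. That holds because $\dim_A\overline E=\dim_A E<d$ forces porosity, hence Lebesgue measure zero. I expect this measure-zero issue, together with the boundary bookkeeping for large $R$ in the first direction, to be the only real obstacles; the core estimates are the two short packing and layer computations above.
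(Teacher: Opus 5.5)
The paper does not prove Theorem A. It quotes it from Lehrb\"ack--Tuominen and only ever uses the ``in particular'' implication (Aikawa bound $\Rightarrow \dim_A E\le 2-\epsilon$), in the proof of Theorem~\ref{Thm:Aikawa-Weierstrass QUANT}. There is no in-paper argument to compare against, and your proposal works as a correct, self-contained substitute. For the implication the paper needs, your packing argument is exactly right: the disjoint balls $B(x_i,r/2)\subset B(x,2R)$ carry $\dist(\cdot,E)^{s-d}\ge r^{s-d}$, which gives $N_{2r}(E\cap B(x,R))\lesssim (R/r)^s$ for $2R<\diam E$. Your dyadic layer estimate $|A_k|\lesssim 2^{kt}(2^{-k}R)^d$, summed against $(2^{-k}R)^{s-d}$ with $t<s$, correctly gives the reverse inequality, which the paper never uses. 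The citation buys brevity and the full equivalence from the literature. Your route shows that in $\R^d$ both inequalities are elementary, and that the direction the paper relies on takes a few lines.

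Two small repairs are needed. First, in the large-scale case, the bound you proved only controls $E\cap B(x,R)$ with $2R<\diam E$. That set need not contain $E$, so it does not by itself cover $E$ by boundedly many balls. Use Euclidean volume packing instead: a maximal $\frac14\diam E$-separated subset of $E$ has at most $9^d$ points. Then apply your small-scale bound in each ball of radius $\frac14\diam E$, and note that $r>\frac14\diam E$ only costs a constant. Also, $B(x,R)\cap E=E\cap B(x,\diam E)$ is not literally true for $\frac12\diam E\le R<\diam E$; you only need the inclusion into $E$. Second, for $|\overline E|=0$ when $s<d$ you do not need porosity. With $t<d$, for every $\rho<R$ the set $\overline E\cap B(z_0,R)$ lies in the $\rho$-neighbourhood of $E\cap B(z_0,2R)$, and that neighbourhood has measure $\lesssim R^t\rho^{d-t}\to 0$.
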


\subsection{Weierstrass functions}
In this subsection we fix the notation we follow regarding Weierstrass functions, truncated sums, certain uniform constants, and record a few elementary inequalities. Recall that a function $\phi:\R\to \R$ is \textit{Lipschitz} if there is $L>0$ such that 
$$
|\phi(x)-\phi(y)|\leq L |x-y|,
$$ for all $x,y\in \R$. The smallest such $L$ is called the \textit{Lipschitz constant} of $\phi$ and denoted by $\operatorname{Lip} \phi$.
Fix $a\in (0,1)$ and $b\in\mathbb N$ with $ab>1$,
for the rest of the manuscript. Let $\phi:\R\to\R$ be
a Lipschitz $1$-periodic function with
$$
B_\phi
=
\sup\{ |\phi(x)|:x\in [0,1] \},
\qquad
\kappa_\phi
=
\frac{\operatorname{Lip}(\phi)}{ab-1}.
$$and set
$$
W_{a,b}^\phi(x)
=
\sum_{j=0}^\infty a^j\phi(b^j x),
\qquad x\in [0,1].
$$
We say that $\phi$ is the \textit{generating function} of the Weierstrass function $W_{a,b}^\phi$. We denote the graph of the corresponding Weierstrass function by
$$
G_{a,b}^\phi=G(W_{a,b}^\phi)
=
\{(x,W_{a,b}^\phi(x)):0\leq x\leq1\}.
$$

If $a,b,\phi$ are fixed and clear in the given context, we often write $W=W_{a,b}^\phi$ and $G=G_{a,b}^\phi$.
Following this convention, for $m\in \N$, we also set
$$
P_m(x)=P_m(a,b,\phi;\,x)
=
\sum_{j=0}^{m-1}a^j\phi(b^jx),
$$
for the corresponding truncated sum, with $P_0=0$.

For an interval $I\subset [0,1]$ and a function $f:[0,1]\to\mathbb R$, we denote the oscillation of $f$ over $I$ by
$$
\operatorname{osc}_I f
=
\sup\{|f(x)-f(x')|:x,x'\in I\},
$$ and the Lipschitz constant of $f$ by 
$$
\operatorname{Lip}(f):=\sup \left\{\frac{|f(x)-f(x')|}{|x-x'|}:\, x,x'\in [0,1],\, x\neq x'\right\}.
$$
We record a few elementary estimates on $P_m$. For
every integer $m\geq0$ and every $x\in[0,1]$, we have
\begin{equation}
\label{eq:phi-tail-Pn}
\left|
W(x)- P_m(x)
\right|
\leq
\frac{B_\phi}{1-a}a^m.
\end{equation}
Also, due to $\phi$ being Lipschitz,
\begin{equation}
\label{eq:phi-Lipschitz-Pn}
\operatorname{Lip}(P_m)
\leq
\operatorname{Lip}(\phi)\sum_{j=0}^{m-1}(ab)^j
\leq
\kappa_\phi(ab)^m.
\end{equation}
Consequently, if $I$ is an interval of length at most $b^{-m}$, then
\begin{equation}
\label{eq:phi-oscillation-Pn}
\operatorname{osc}_I P_m
\leq
\kappa_\phi a^m.
\end{equation}

Denote the $b$-adic half-open intervals of level $n$  by
$$
\mathcal D_n
=
\left\{
\left[jb^{-n},(j+1)b^{-n}\right):
j\in\Z
\right\}.
$$ Given $A>0$ and $y\in \R$, define an interval $J\in \mathcal D_n$ to be an $(A,y)$-\textit{Aikawa interval} of level $n$ if
$$
\dist(y, P_n(\overline{J}))\leq A a^n.
$$These are the intervals that we aim to count and ``rule out'' in our estimate of the integral of $\dist(z,G)^{-\epsilon}$, by showing that there are not many of them, in a quantitative way (see Lemma \ref{Lem:phi-uniform-good-interval-estimate}).
Moreover, given a pair of integers
$0\leq m\leq n$, denote by
$$
\mathcal G^{\,n}_m(A,y)
=
\left\{
Q\in\mathcal D_m:
\text{there is } (A,y)-\text{Aikawa interval} \quad J\in\mathcal D_n\text{ with }J\subset Q
\right\}
$$ the collection of all $b$-adic intervals of level $m$ that contain Aikawa intervals of level $n$.
We may omit $A,y$ and simply write
${\mathcal G}^{\,n}_m$ if these values are fixed and clear from the context.

\section{Counting the Aikawa intervals}\label{sec: counting intervals}

Recall that a function $\phi:\R\to \R$ satisfies a  \textit{$b$-adic positive difference (PD) condition}  with data $(n_\phi, k_\phi, t_0)$  if there
are integers $n_\phi\geq1$ and $1\leq k_\phi<b^{n_\phi}$, and a point
$t_0\in[0,1-k_\phi b^{-n_\phi}]$ such that
\begin{equation}
\tag{PD}
\label{eq:phi-finite-block-transversality}
\left|
 P_{n_\phi}(t_0+k_\phi b^{-n_\phi})
- P_{n_\phi}(t_0)
\right|
>
\kappa_\phi k_\phi b^{-n_\phi}.
\end{equation}
%
%
Assume for the remainder of the section that $\phi$ is a Lipschitz $1$-periodic function generating $W=W_{a,b}^\phi$, and $\phi$ satisfies a $b$-adic (PD) condition with fixed data
$(n_\phi,k_\phi,t_0)$. Set
$$
\eta
=
\frac14
\left(
\left|
 P_{n_\phi}(t_0+k_\phi b^{-n_\phi})
- P_{n_\phi}(t_0)
\right|
-
\kappa_\phi k_\phi b^{-n_\phi}
\right)
>0.
$$
By continuity of the function
$$
F(t)=\left|
 P_{n_\phi}(t+k_\phi b^{-n_\phi})
- P_{n_\phi}(t)
\right|
$$ at $t_0$, where $F(t_0)= 4\eta +\kappa_\phi k_\phi b^{-n_\phi}$, there is a non-trivial closed interval
$$
I_\phi\subset(0,1-k_\phi b^{-n_\phi})
$$
with length less than $k_\phi b^{-n_\phi}$, such that
\begin{equation}
\label{eq:phi-uniform-finite-block-transversality}
\left|
 P_{n_\phi}(t+k_\phi b^{-n_\phi})
- P_{n_\phi}(t)
\right|
\geq
\kappa_\phi k_\phi b^{-n_\phi}+3\eta,
\end{equation}
for every $t\in I_\phi$. Notice that $I_\phi$ and
$I_\phi+k_\phi b^{-n_\phi}$ are disjoint due to $|I_\phi|<k_\phi b^{-n_\phi}$.

Choose and fix an integer $L\geq n_\phi$ sufficiently large that
\begin{equation}
\label{eq:phi-choice-of-L}
b^L|I_\phi|\geq2
\qquad\text{and}\qquad
a^L
\left(
\frac{B_\phi}{1-a}+\kappa_\phi
\right)
\leq
\frac{\eta}{4},
\end{equation}
and set
\begin{equation}
\label{eq:phi-def-sigma-theta}
\theta_\phi
=
\frac{-\log(1-|I_\phi|/2))}{L\log b}.
\end{equation}
Since $0<|I_\phi|<k_\phi b^{-n_\phi}<1$, we have
$0<|I_\phi|/2<1/2$, and hence $\theta_\phi\in(0,1)$, due to $b^L\geq 2$. In addition, note that
\begin{equation}
\label{eq:phi-sigma-theta-identity}
(1-|I_\phi|/2)b^L
=
b^{(1-\theta_\phi)L}.
\end{equation}
For $A>0$  fixed, choose an integer $M=M_A\geq L$ such that
\begin{equation}
\label{eq:phi-choice-of-M}
Aa^{M_A}
\leq
\frac{\eta}{4}.
\end{equation}
The integer $M_A$ depends only on $a,b,A, \phi$, and the (PD) condition data. If $A$ has been fixed and is clear from the context, we often write $M=M_A$.

We first count the intervals up to a uniform level that contain Aikawa sub-intervals.

\begin{Lem}
\label{Lem:phi-L-level-uniform-good-interval-estimate}
Fix
$A>0$. Then, for every $y\in\R$, for all integers $k,n$ with
$$
0\leq k\leq n-M_A,
$$
and for every interval $Q\in\mathcal D_k$, we have
\begin{equation}
\label{eq:phi-successive-level-count}
\#\left\{
Q'\in{\mathcal G}^{\,n}_{k+L}(A,y):Q'\subset Q
\right\}
\leq
(1-|I_\phi|/2)b^L
=
b^{(1-\theta_\phi)L}.
\end{equation}
\end{Lem}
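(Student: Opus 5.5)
The plan is to rescale $Q$ to $[0,1)$, pair up level-$L$ subintervals of $Q$ via the $b$-adic shift $s:=k_\phi b^{-n_\phi}$ supplied by the (PD) condition, and show that at most one interval in each pair can belong to ${\mathcal G}^{\,n}_{k+L}(A,y)$.

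\textbf{Rescaling and pairing.} Write $Q=[jb^{-k},(j+1)b^{-k})$ and parametrize $x=jb^{-k}+b^{-k}t$ with $t\in[0,1]$. By $1$-periodicity of $\phi$ and $b\in\N$,
$$P_n(x)=P_k(x)+a^k P_{n-k}(b^kx)=P_k(x)+a^kP_{n-k}(t).$$
Under this map the subintervals $Q'\in\mathcal D_{k+L}$ with $Q'\subset Q$ correspond to the intervals of $\mathcal D_L$ in $[0,1)$. Since $L\ge n_\phi$, the number $s$ is an integer multiple of $b^{-L}$. Hence for each $R\in\mathcal D_L$ with $\overline R\subset I_\phi$, the translate $R+s$ is again in $\mathcal D_L$ and lies in $I_\phi+s\subset(0,1)$. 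Because $I_\phi$ and $I_\phi+s$ are disjoint, distinct such $R$ give pairwise disjoint pairs $\{R,R+s\}$. The number of such $R$ is roughly $b^L|I_\phi|$ minus boundary losses. I would use the first condition in \eqref{eq:phi-choice-of-L} to get at least $b^L|I_\phi|/2$ of them. This step may require enlarging $L$ slightly, e.g. $b^L|I_\phi|\ge4$, which affects no other part of the argument. Once at most one interval per pair is shown to be counted, we get
$$\#\{Q'\in{\mathcal G}^{\,n}_{k+L}:Q'\subset Q\}\le b^L-b^L|I_\phi|/2=b^{(1-\theta_\phi)L},$$
using \eqref{eq:phi-sigma-theta-identity}.

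\textbf{Core claim: no pair has both members counted.} Suppose both rescaled $R$ and $R+s$ contain $(A,y)$-Aikawa intervals of level $n$. Then there are points $x_1\in\overline{Q_R}$ and $x_2\in\overline{Q_{R+s}}$ with $|P_n(x_i)-y|\le Aa^n$, so $|P_n(x_1)-P_n(x_2)|\le 2Aa^n\le 2Aa^{k+M}\le \eta a^k/2$ by \eqref{eq:phi-choice-of-M} and $n\ge k+M$. Set $x_2'=x_1+b^{-k}s$, which lies in the same closed level-$(k+L)$ interval as $x_2$. We then bound $P_n(x_2')-P_n(x_1)$ from below and $P_n(x_2)-P_n(x_2')$ from above.

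For the lower bound, the rescaled term is $a^k[P_{n-k}(t_1+s)-P_{n-k}(t_1)]$. Every summand of index $i\ge n_\phi$ cancels, since $b^is\in\Z$, and $n-k\ge M\ge L\ge n_\phi$. So this term equals $a^k[P_{n_\phi}(t_1+s)-P_{n_\phi}(t_1)]$. Its absolute value is at least $a^k(\kappa_\phi s+3\eta)$ by \eqref{eq:phi-uniform-finite-block-transversality}, since $t_1\in I_\phi$. The low-frequency part satisfies $|P_k(x_2')-P_k(x_1)|\le \kappa_\phi(ab)^k b^{-k}s=\kappa_\phi a^ks$ by \eqref{eq:phi-Lipschitz-Pn}. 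Hence $|P_n(x_2')-P_n(x_1)|\ge 3\eta a^k$.

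For the upper bound, split $P_n=P_{k+L}+(P_n-P_{k+L})$. By \eqref{eq:phi-oscillation-Pn}, the first piece oscillates by at most $\kappa_\phi a^{k+L}$ on an interval of length $b^{-(k+L)}$. The second piece is bounded in absolute value by $B_\phi a^{k+L}/(1-a)$. This gives $|P_n(x_2)-P_n(x_2')|\le a^ka^L(2B_\phi/(1-a)+\kappa_\phi)\le \eta a^k/2$ by \eqref{eq:phi-choice-of-L}; if the factor $2$ is a problem, this is absorbed by enlarging $L$ at the outset. Combining,
$$|P_n(x_1)-P_n(x_2)|\ge 3\eta a^k-\eta a^k/2>\eta a^k/2,$$
which contradicts the first display of this step.

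\textbf{Main obstacle.} The delicate step is the exact invariance of the high-frequency terms under the shift $s$ combined with the Lipschitz bound on $P_k$. Here the precise form of $\kappa_\phi$ and the (PD) inequality are essential, and the constants must close with the margins $3\eta$ versus $\eta/2$. The boundary count of level-$L$ intervals inside $I_\phi$ is the other point needing care, but it is routine.
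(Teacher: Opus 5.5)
Your argument is essentially the paper's. You rescale $Q$, pair each level-$L$ child with its translate by $s=k_\phi b^{-n_\phi}$, remove the terms of index $\geq n_\phi$ by exact periodicity, control $P_k$ by $\kappa_\phi a^k s$, and count disjoint pairs. Your core lower and upper estimates are correct.

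The one structural difference is where you compare values. You compare $P_n$ at the actual Aikawa points $x_1,x_2$. The paper instead first transfers each Aikawa condition to $P_{k+L}$ at the \emph{left endpoint} $x_\mu$ of the child, via $a^{-k}|P_{k+L}(x_\mu)-y|\leq\eta/2$, and then compares $H(\mu/b^L)$ with $H((\mu+q)/b^L)$. Your worry about the factor $2$ is unfounded: $a^L(2B_\phi/(1-a)+\kappa_\phi)\leq 2a^L(B_\phi/(1-a)+\kappa_\phi)\leq\eta/2$ follows directly from \eqref{eq:phi-choice-of-L}.

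That difference causes a genuine, though easily repaired, gap in the counting step. Since $x_1$ may lie anywhere in $\overline{Q_R}$, you need $\overline R\subset I_\phi$ in order to apply \eqref{eq:phi-uniform-finite-block-transversality} at $t_1$. The number of such $R$ is only guaranteed to be at least $\lfloor b^L|I_\phi|\rfloor-1$, and this can be attained. If $2<b^L|I_\phi|<3$, which \eqref{eq:phi-choice-of-L} permits, you may get a single pair while you need at least $b^L|I_\phi|/2>1$. So the bound $b^L-b^L|I_\phi|/2$ does not follow.

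Enlarging $L$ does not rescue the lemma as stated. $L$ is an arbitrary fixed integer satisfying \eqref{eq:phi-choice-of-L}, and $\theta_\phi$ is defined from it, so you would be proving a statement with different hypotheses.

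The fix is to anchor at left endpoints, as the paper does:
\begin{itemize}
\item Use every $R$ whose left endpoint $\tau$ lies in $I_\phi$. This is the paper's set $\mathcal V$, of size at least $b^L|I_\phi|-1\geq b^L|I_\phi|/2$.
\item Let $z_1$ and $z_2=z_1+b^{-k}s$ be the left endpoints of $Q_R$ and $Q_{R+s}$.
\item Your lower-bound computation at $\tau$ gives $|P_n(z_2)-P_n(z_1)|\geq 3\eta a^k$.
\item Your oscillation bound gives $|P_n(x_i)-P_n(z_i)|\leq\eta a^k/2$ for $i=1,2$.
\item Hence $|P_n(x_1)-P_n(x_2)|\geq 2\eta a^k>\eta a^k/2$. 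This still contradicts $|P_n(x_1)-P_n(x_2)|\leq 2Aa^n\leq\eta a^k/2$.
\end{itemize}
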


\begin{proof}
Fix $A>0$ and let $M=M_A\geq L$ be as in
\eqref{eq:phi-choice-of-M}. Fix $y\in\R$, integers $k,n$ satisfying
$0\leq k\leq n-M$, and $Q\in\mathcal D_k$. For the rest of the proof
we write
$$
{\mathcal G}^{\,n}_{k+L}
=
{\mathcal G}^{\,n}_{k+L}(A,y).
$$
Write
$$
Q
=
\left[
\frac v{b^k},
\frac{v+1}{b^k}
\right),
\qquad
v\in\mathbb Z.
$$
The intervals in $\mathcal D_{k+L}$ contained in $Q$ are
$$
Q_\mu
=
\left[
\frac{v+\mu/b^L}{b^k},
\frac{v+(\mu+1)/b^L}{b^k}
\right),
\qquad
0\leq\mu<b^L.
$$
Denote the left endpoint of $Q_\mu$ by
$$
x_\mu
=
\frac{v+\mu/b^L}{b^k}.
$$

Suppose that
$Q_\mu\in{\mathcal G}^{\,n}_{k+L}$. There is then an $(A,y)$-Aikawa
interval $J\in\mathcal D_n$ such that
$$
J\subset Q_\mu
\qquad\text{and}\qquad
\operatorname{dist}
\bigl(y, P_n(\overline J)\bigr)
\leq
Aa^n.
$$
Since $ P_n(\overline J)$ is compact, there is
$x\in\overline J$ such that
$$
| P_n(x)-y|
\leq
Aa^n.
$$
Since $M\geq L$ and $k\leq n-M$, we have $k+L\leq n$. Using
\eqref{eq:phi-oscillation-Pn} and the above, we obtain
\begin{align}
a^{-k}
\left|
 P_{k+L}(x_\mu)-y
\right|
&\leq
a^{-k}| P_n(x)-y|
+
a^{-k}
\left|
 P_n(x)- P_{k+L}(x)
\right|
\nonumber\\
&\qquad
+
a^{-k}
\left|
 P_{k+L}(x)- P_{k+L}(x_\mu)
\right|
\nonumber\\
&\leq
Aa^{n-k}
+
\frac{B_\phi}{1-a}a^L
+
\kappa_\phi a^L
\nonumber\\
&\leq
Aa^M
+
a^L
\left(
\frac{B_\phi}{1-a}+\kappa_\phi
\right)
\nonumber\\
&\leq
\frac{\eta}{2},
\label{eq:phi-good-child-upper-bound}
\end{align}
where the last inequality follows from
\eqref{eq:phi-choice-of-L} and \eqref{eq:phi-choice-of-M}.

Define
$$
H(t)
=
a^{-k}
 P_{k+L}
\left(
\frac{v+t}{b^k}
\right),
\qquad
0\leq t\leq1.
$$
We then have
$$
H(t)
=
g(t)+ P_L(t),
$$
where
$$
g(t)
=
a^{-k}
 P_k
\left(
\frac{v+t}{b^k}
\right).
$$
Indeed, for every $0\leq j<L$, the $1$-periodicity of $\phi$ and the
fact that $b^jv$ is an integer give
$$
\phi\left(
b^{k+j}\frac{v+t}{b^k}
\right)
=
\phi(b^jv+b^jt)
=
\phi(b^jt),
$$
which implies
$$
P_L(t)=\sum_{j=0}^{L-1} a^{j}\phi(b^j t)=\sum_{j=0}^{L-1} a^{j}\phi\left(
b^{k+j}\frac{v+t}{b^k}\right)=
\sum_{j=k}^{k+L-1} a^{j-k}\phi\left(
b^{j}\frac{v+t}{b^k}\right)=H(t)-g(t)
$$
Since $x_\mu=(v+\mu/b^L)/b^{k}$, it follows from \eqref{eq:phi-good-child-upper-bound} and the definition of $H$ that
\begin{equation}
\label{eq:phi-H-value-condition}
Q_\mu\in{\mathcal G}^{\,n}_{k+L}
\quad\text{implies}\quad
\left|
H\left(\frac{\mu}{b^L}\right)-a^{-k}y
\right|
\leq
\frac{\eta}{2}.
\end{equation}
After an application of triangle inequality, the above also yields that
\begin{equation}
\label{eq:phi-H-two-values}
Q_{\mu_1},Q_{\mu_2}\in{\mathcal G}^{\,n}_{k+L}\quad\text{implies}\quad\left|
H\left(\frac{\mu_1}{b^L}\right)
-
H\left(\frac{\mu_2}{b^L}\right)
\right|
\leq
\eta.
\end{equation}

By \eqref{eq:phi-Lipschitz-Pn}, we have
\begin{equation}
\label{eq:phi-Lipschitz-g}
\operatorname{Lip}(g)
\leq
a^{-k}b^{-k}\operatorname{Lip}( P_k)
\leq
\kappa_\phi.
\end{equation}
On the other hand, for every $j\geq n_\phi$ we have
$k_\phi b^{j-n_\phi}\in\mathbb Z$.
Therefore, for every $L\geq n_\phi$ and
$t\in[0,1-k_\phi b^{-n_\phi}]$, the
$1$-periodicity of $\phi$ gives
\begin{equation}
\label{eq:phi-block-translation}
 P_L(t+k_\phi b^{-n_\phi})- P_L(t)
=
 P_{n_\phi}(t+k_\phi b^{-n_\phi})
- P_{n_\phi}(t).
\end{equation}
Combining \eqref{eq:phi-uniform-finite-block-transversality},
\eqref{eq:phi-Lipschitz-g}, and \eqref{eq:phi-block-translation}, we
obtain
\begin{align}
|H(t+k_\phi b^{-n_\phi})-H(t)|
&\geq
\left|
 P_L(t+k_\phi b^{-n_\phi})- P_L(t)
\right|
-
|g(t+k_\phi b^{-n_\phi})-g(t)|
\nonumber\\
&\geq
\kappa_\phi k_\phi b^{-n_\phi}+3\eta
-\kappa_\phi k_\phi b^{-n_\phi}
\nonumber\\
&=
3\eta
\label{eq:phi-H-separated-values}
\end{align}
for every $t\in I_\phi$.

Set $q
=
k_\phi b^{L-n_\phi}$.
Then, since $k_\phi<b^{n_\phi}$, $q$ is an integer satisfying $1\leq q<b^L$ and
\begin{equation}\label{eq: q def and k_phi relation}
    \frac q{b^L}
=
k_\phi b^{-n_\phi}.
\end{equation}
Consider the set of indices that correspond to endpoints of intervals in $\mathcal D_L$ contained in $I_\phi$, i.e.,
$$
\mathcal V
=
\left\{
\mu\in\mathbb Z:
0\leq\mu<b^L
\text{ and }
\frac{\mu}{b^L}\in I_\phi
\right\}.
$$
Note that by \eqref{eq:phi-choice-of-L} and $ab>1$, we have $|I_\phi|\geq 2b^{-L}$, and so $\mathcal V\neq \emptyset$. In addition, since $I_\phi\subset(0,1-k_\phi b^{-n_\phi})$, both $\mu$ and
$\mu+q$ lie between
$0$ and $b^L-1$ for all $\mu\in\mathcal V$. Moreover, by definition of $\mathcal V$, we have
$$
(\#\mathcal V+1)b^{-L}\geq |I_\phi|.
$$ This and $|I_\phi|\geq 2b^{-L}$ imply that
\begin{equation}
\label{eq:phi-number-of-indices}
\#\mathcal V
\geq
b^L|I_\phi|-1
\geq
\frac{|I_\phi|}{2}b^L.
\end{equation}

The index sets
$$
\{\mu,\mu+q\},
\qquad
\mu\in\mathcal V,
$$
are mutually disjoint. Indeed, their first indices correspond to  $b$-adic intervals of level $L$ with left endpoints in $I_\phi$, while their second indices correspond to  $b$-adic intervals of level $L$ with left endpoints in
$I_\phi+k_\phi b^{-n_\phi}$, and
$I_\phi\cap(I_\phi+k_\phi b^{-n_\phi})=\varnothing$.

For every $\mu\in\mathcal V$, applying
\eqref{eq:phi-H-separated-values} at $t=\mu/b^L$ and using \eqref{eq: q def and k_phi relation} gives
$$
\left|
H\left(\frac{\mu}{b^L}\right)
-
H\left(\frac{\mu+q}{b^L}\right)
\right|
\geq
3\eta.
$$
If both $Q_\mu$ and $Q_{\mu+q}$ belonged to
${\mathcal G}^{\,n}_{k+L}$, then
\eqref{eq:phi-H-two-values} would imply that the left-hand side is at
most $\eta$, which is a contradiction. Thus, for each
$\mu\in\mathcal V$, at least one of $Q_\mu$ and $Q_{\mu+q}$ does not
belong to ${\mathcal G}^{\,n}_{k+L}$. Since the corresponding
pairs of indices are mutually disjoint, \eqref{eq:phi-number-of-indices}
shows that at least $(|I_\phi|/2)b^L$ of the intervals $Q_\mu$ do not
belong to ${\mathcal G}^{\,n}_{k+L}$. Consequently,
\begin{align*}
\#\left\{
Q'\in{\mathcal G}^{\,n}_{k+L}:Q'\subset Q
\right\}
&\leq
b^L-(|I_\phi|/2)b^L\\
&=
(1-|I_\phi|/2)b^L\\
&=
b^{(1-\theta_\phi)L},
\end{align*}
where the final equality follows from
\eqref{eq:phi-sigma-theta-identity}. This completes the proof.
\end{proof}

Given arbitrary $A>0$, $y\in \R$, we record a hereditary property of the collections $\mathcal G_\ell^n(A,y)$ that we use in following arguments. Namely, if
$Q'\in{\mathcal G}^{\,n}_\ell(A,y)$ and $Q\in\mathcal D_m$ is
an ancestor of $Q'$, where $0\leq m\leq\ell$, then
$Q\in{\mathcal G}^{\,n}_m(A,y)$. Indeed, if
$J\in\mathcal D_n$ is an $(A,y)$-Aikawa sub-interval of level $n$ of
$Q'\in{\mathcal G}^{\,n}_\ell(A,y)$, then due to
$$
J\subset Q'\subset Q,
$$
it is also an $(A,y)$-Aikawa sub-interval of $Q$, implying
$Q\in{\mathcal G}^{\,n}_m(A,y)$. 

We next iterate the previous lemma accordingly, in order to obtain similar uniform cardinality bounds for other levels.

\begin{Lem}
\label{Lem:phi-uniform-good-interval-estimate}
Suppose that $A>0$. Then there is $C\geq1$, depending only on
$a,b,\phi,A$, such that for every triplet of integers
$0\leq p\leq N\leq n$, for every interval $K\in\mathcal D_p$, and
for every $y\in\R$, we have
\begin{equation}
\label{eq:phi-uniform-good-interval-count}
\#\left\{
I\in{\mathcal G}^{\,n}_N(A,y):I\subset K
\right\}
\leq
C b^{(1-\theta_\phi)(N-p)},
\end{equation}
where $\theta_\phi\in(0,1)$ is as in
\eqref{eq:phi-def-sigma-theta}.
\end{Lem}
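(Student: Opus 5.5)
We iterate Lemma~\ref{Lem:phi-L-level-uniform-good-interval-estimate} along a chain of levels spaced by $L$, using the hereditary property of the collections $\mathcal G^{\,n}_m(A,y)$. Fix $A>0$, $y\in\R$, integers $0\le p\le N\le n$, and $K\in\mathcal D_p$. We split into two regimes according to how $N$ compares with $n-M_A$. The constant $C$ will absorb a bounded number of levels, namely at most $L+M_A$ levels. Here $L$ and $M_A$ depend only on $a,b,\phi,A$ and the (PD) data, which are fixed once $\phi$ is.

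\textbf{Main regime.} Suppose first that $N\le n-M_A+L$. Write $N-p=sL+r$ with $s\ge 0$ and $0\le r<L$. Consider the levels $k_i=p+iL$ for $i=0,\dots,s$. We argue by induction on $i$ that
$$
\#\{Q\in\mathcal G^{\,n}_{k_i}:Q\subset K\}\le b^{(1-\theta_\phi)iL}.
$$
The case $i=0$ is trivial. For the inductive step, every $Q'\in\mathcal G^{\,n}_{k_{i+1}}$ with $Q'\subset K$ has its level-$k_i$ ancestor in $\mathcal G^{\,n}_{k_i}$, by the hereditary property. Moreover, each such ancestor has at most $b^{(1-\theta_\phi)L}$ good children at level $k_{i+1}$. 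This is exactly \eqref{eq:phi-successive-level-count}, which applies because $k_i\le k_s-L\le N-L\le n-M_A$ for $i<s$.

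Finally, each good interval at level $k_s$ contains at most $b^r\le b^L$ intervals of level $N$. This gives the bound
$$
b^L\, b^{(1-\theta_\phi)sL}\le b^{L}\,b^{\theta_\phi r}\,b^{(1-\theta_\phi)(N-p)}\le b^{2L}\,b^{(1-\theta_\phi)(N-p)}.
$$

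\textbf{Top regime.} Suppose now that $N>n-M_A+L$. Set $N'=\max(p,\,n-M_A+L)$, so that $N-N'<M_A$. Then
$$
\mathcal G^{\,n}_N\subset\{\text{descendants of intervals in }\mathcal G^{\,n}_{N'}\},
$$
and each interval of level $N'$ has fewer than $b^{M_A}$ descendants at level $N$. Applying the main regime at level $N'$ gives the bound
$$
b^{2L}\,b^{(1-\theta_\phi)(N'-p)}\,b^{M_A}\le b^{2L+M_A}\,b^{(1-\theta_\phi)(N-p)},
$$
where we used $N'\le N$ and $(1-\theta_\phi)\ge0$. Hence $C=b^{2L+M_A}$ works, and it depends only on $a,b,\phi,A$.

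\textbf{Main obstacle.} The only delicate step is checking the index range. Lemma~\ref{Lem:phi-L-level-uniform-good-interval-estimate} requires $k\le n-M_A$ at every step of the chain. Splitting off the final at most $M_A+L$ levels, where we use only the trivial count $b^{\text{(number of levels)}}$, is exactly what handles this. We also need that the counting in the induction is within the fixed $K$. This holds because descendants of intervals inside $K$ stay inside $K$, since $b$-adic intervals are nested.
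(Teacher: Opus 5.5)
Your proof is correct and is essentially the paper's argument: you iterate Lemma~\ref{Lem:phi-L-level-uniform-good-interval-estimate} along the levels $p+iL$ using the hereditary property, stop once either $k\le n-M_A$ or $k+L\le N$ would fail, and absorb the fewer than $L+M_A$ leftover levels into the constant. The paper does this with a single maximal $s$ instead of two regimes, which gives the sharper constant $(1-|I_\phi|/2)^{-M_A/L}$ rather than your $b^{2L+M_A}$. One cosmetic point: when $N'=p>n-M_A+L$, the main-regime hypothesis fails formally, but then $s=0$ and the count at level $p$ inside $K$ is trivially $1$, so nothing changes.
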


\begin{proof}
Fix $A>0$, and let $M=M_A\geq L$ be as in
\eqref{eq:phi-choice-of-M}. Fix also $K\in\mathcal D_p$,
 $y\in\R$, and
$$
0\leq p\leq N\leq n.
$$
For every integer $0\leq m\leq n$, write
$$
{\mathcal G}^{\,n}_m
=
{\mathcal G}^{\,n}_m(A,y)
$$
for the rest of the proof.

We shall use Lemma \ref{Lem:phi-L-level-uniform-good-interval-estimate} iteratively to show \eqref{eq:phi-uniform-good-interval-count}. 
Let $s$ be the largest nonnegative integer such that, for every integer
$j$ with $0\leq j<s$,
\begin{equation}
\label{eq:phi-successive-level-conditions}
p+jL\leq n-M
\qquad\text{and}\qquad
p+(j+1)L\leq N.
\end{equation}
The value $s=0$ is allowed. In particular, $s=0$ if at least one of the above inequalities fails for $j=s=0$. 

For every $0\leq j\leq s$, define
$$
\mathcal F_j
=
\left\{
Q\in{\mathcal G}^{\,n}_{p+jL}:Q\subset K
\right\}.
$$
Since $K$ is the only interval in $\mathcal D_p$ contained in itself,
we trivially have $\#\mathcal F_0\leq1$.

For every $0\leq j<s$, the level $(p+jL)$ ancestor of every interval
in $\mathcal F_{j+1}$ belongs to $\mathcal F_j$ by the hereditary
property of the collections $\mathcal{G}^n_\ell(A,y)$. Applying \eqref{eq:phi-successive-level-count} to each
interval in $\mathcal F_j$, which is possible due to $j<s$ and
\eqref{eq:phi-successive-level-conditions}, we obtain
$$
\#\mathcal F_{j+1}
\leq
(1-|I_\phi|/2)b^L\#\mathcal F_j.
$$
Applying the above inductively yields
\begin{equation}
\label{eq:phi-count-after-successive-levels}
\#\mathcal F_s
\leq
\left((1-|I_\phi|/2)b^L\right)^s.
\end{equation}

If $s\geq1$, the second inequality in
\eqref{eq:phi-successive-level-conditions}, with $j=s-1$, gives
$p+sL\leq N$; if $s=0$, this follows from $p\leq N$. By the maximality
of $s$, at least one of the inequalities
$$
p+sL\leq n-M,
\qquad
p+(s+1)L\leq N
$$
must fail. Therefore, either
$$
p+sL>n-M,
$$
or
$$
p+(s+1)L>N.
$$
In the former case, since $N\leq n$, we have
$$
N-p-sL
\leq
n-p-sL
<
M.
$$
In the latter case,
$$
N-p-sL
<
L
\leq
M.
$$
Thus, for any such maximal integer $s\geq 0$ we have
\begin{equation}
\label{eq:phi-remaining-levels}
0\leq N-p-sL<M.
\end{equation}

If $I\in{\mathcal G}^{\,n}_N$ is contained in $K$, then
its level $(p+sL)$ ancestor exists due to the second inequality in \eqref{eq:phi-successive-level-conditions}  and it belongs to $\mathcal F_s$ by the
aforementioned hereditary property. Each interval in $\mathcal F_s$ contains 
$$
\frac{b^{-(p+sL)}}{b^{-N}}=b^{N-p-sL}
$$
intervals from $\mathcal D_N$. It follows from
\eqref{eq:phi-count-after-successive-levels} that
\begin{align*}
\#\left\{
I\in{\mathcal G}^{\,n}_N:I\subset K
\right\}
&\leq
\left((1-|I_\phi|/2)b^L\right)^s
b^{N-p-sL}\\
&=
b^{N-p}(1-|I_\phi|/2)^s.
\end{align*}
Note that by \eqref{eq:phi-remaining-levels} we have
$$
s
\geq
\frac{N-p-M}{L}.
$$
Since $0<1-|I_\phi|/2<1$, the above inequalities yield
\begin{align*}
\#\left\{
I\in{\mathcal G}^{\,n}_N:I\subset K
\right\}
&\leq
b^{N-p}
(1-|I_\phi|/2)^{(N-p-M)/L}\\
&=
(1-|I_\phi|/2)^{-M/L}
b^{N-p}
(1-|I_\phi|/2)^{(N-p)/L}\\
&=
(1-|I_\phi|/2)^{-M/L}
b^{(1-\theta_\phi)(N-p)},
\end{align*}
where the last equality follows from
\eqref{eq:phi-def-sigma-theta}. Therefore
\eqref{eq:phi-uniform-good-interval-count} holds with
$$
C
=
(1-|I_\phi|/2)^{-M/L}.
$$
Since $M$ depends only on $a,b,\phi,A$, the same is true for $C$.
\end{proof}

\section{Assouad dimension and general Weierstrass functions}\label{sec: dimA general W}
In this section we prove a quantitative version of Theorem \ref{Thm:Aikawa-phi-Weierstrass}, using the Aikawa characterization of the Assouad dimension. For $a,b,\phi$ and $W=W_{a,b}^\phi$ as in the previous section, write $G=G(W)$ and
$$
G_t
=
\{z\in\mathbb R^2:\operatorname{dist}(z,G)<t\},
$$ for all $t>0$. Also, fix $\theta=\theta_\phi\in (0,1)$ as in \eqref{eq:phi-def-sigma-theta} for the rest of the section.
We aim to decompose the integral of the distance function over the arbitrary $B(z_0, R)$ into a sum of integrals over annular regions $G_{b^{-j}}\setminus G_{b^{-j-1}}$ around $G$ within the ball. We then use the following estimates that are derived from the upper bound on the number of Aikawa intervals.

\begin{Lem}
\label{Lem:phi-uniform-Aikawa-shell-estimate}
There is $C\geq 1$ depending only on $a, b, \phi$ such that for every $\epsilon\in (0,\theta_\phi)$, for every $z_0\in G$, and for every $R\in (0,1)$ we have 
\begin{equation}
\label{eq:phi-uniform-Aikawa-shell-estimate}
\int_{B(z_0,R)\cap
\left(
 G_{b^{-j}}\setminus
 G_{b^{-j-1}}
\right)}
\operatorname{dist}(z, G)^{-\epsilon}\,dz
\leq
Cb^{1+\epsilon}R
b^{-(1-\theta_\phi)p}
b^{-j(\theta_\phi-\epsilon)},
\end{equation} for every integer $j\geq p=\lceil -\log_b R\rceil$.
\end{Lem}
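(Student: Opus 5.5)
On the shell $G_{b^{-j}}\setminus G_{b^{-j-1}}$ the integrand satisfies $\dist(z,G)^{-\epsilon}\le b^{(j+1)\epsilon}$. It therefore suffices to bound the Lebesgue measure of $B(z_0,R)\cap G_{b^{-j}}$ by $C b\,R\, b^{-(1-\theta_\phi)p} b^{-j\theta_\phi}$. Multiplying the two bounds gives
\[
C b^{1+\epsilon} R\, b^{-(1-\theta_\phi)p} b^{-j(\theta_\phi-\epsilon)},
\]
which is the claimed estimate. The whole proof thus reduces to an area bound for a thin neighbourhood of the graph inside a ball of radius $R\approx b^{-p}$. That area bound will be obtained by counting Aikawa intervals with Lemma \ref{Lem:phi-uniform-good-interval-estimate}, applied with $p=\lceil-\log_b R\rceil$, $N=n=j$, and $y$ ranging over horizontal lines.

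To set up the count, write $z_0=(x_0,W(x_0))$. Then $B(z_0,R)$ lies in the strip $[x_0-R,x_0+R]\times\R$. Since $R\le b^{-p+1}$, this strip meets at most $2b+1$ intervals $K\in\mathcal D_p$ (up to periodic wrap-around, which is harmless). Slice the ball horizontally: for each fixed height $y$ with $|y-W(x_0)|<R$, consider the horizontal section of $B(z_0,R)\cap G_{b^{-j}}$ at height $y$.

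\textbf{Key step.} Suppose a point $(x,y)$ lies within $b^{-j}$ of a point $(x',W(x'))$ of $G$. Let $J\in\mathcal D_j$ contain $x$. Then $|x-x'|<b^{-j}$, so $x'$ lies in $J$ or in one of its two neighbours. By \eqref{eq:phi-tail-Pn} and \eqref{eq:phi-oscillation-Pn},
\[
\dist\bigl(y,P_j(\overline{J'})\bigr)\le b^{-j}+\frac{B_\phi}{1-a}a^j+\kappa_\phi a^j
\]
for such a $J'$. Since $b^{-j}\le a^j$ (because $ab>1$), this is at most $A a^j$ for a fixed constant $A=A(a,b,\phi)$. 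Hence every point of the section lies in $J'$ or a neighbour of $J'$, where $J'$ is an $(A,y)$-Aikawa interval of level $j$. Lemma \ref{Lem:phi-uniform-good-interval-estimate}, summed over the $O(b)$ intervals $K$, bounds the number of such $J'$ inside the strip by $C b\, b^{(1-\theta_\phi)(j-p)}$ (the neighbours only add a factor $3$). Each has length $b^{-j}$, so the section has length at most
\[
3Cb\,b^{(1-\theta_\phi)(j-p)}b^{-j}=3Cb\,b^{-(1-\theta_\phi)p}b^{-\theta_\phi j}.
\]
Integrating in $y$ over an interval of length $2R$ (Fubini) gives the measure bound. Combined with the pointwise bound on the integrand, this yields \eqref{eq:phi-uniform-Aikawa-shell-estimate}, with $b^{(j+1)\epsilon}$ producing the factor $b^{\epsilon}$.

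\textbf{Main obstacle.} The hard part is this key step: checking that the Aikawa-interval criterion captures the horizontal sections of the $b^{-j}$-neighbourhood of $G$, with a constant $A$ independent of $j$, $y$ and $z_0$. This is exactly where $ab>1$ is used, since it lets the $b^{-j}$ error be absorbed into $Aa^j$. The constant $C$ of Lemma \ref{Lem:phi-uniform-good-interval-estimate} depends on $A$, and hence only on $a,b,\phi$, as the statement requires. The remaining bookkeeping is routine: neighbouring intervals, edge effects at $0,1$, and the hypotheses $N\le n$ and $p\le j$ of Lemma \ref{Lem:phi-uniform-good-interval-estimate}, which hold because $j\ge p$.
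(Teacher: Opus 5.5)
Your argument is correct and is essentially the paper's proof. Both bound the integrand by $b^{(j+1)\epsilon}$ on the shell, slice horizontally, cover each slice by $b^{-j}$-neighbourhoods of Aikawa intervals counted with Lemma~\ref{Lem:phi-uniform-good-interval-estimate} over the $O(b)$ intervals of $\mathcal D_p$ near $x_0$, and integrate over a $y$-range of length $2R$. The only difference is minor: the paper passes to an auxiliary level $m\geq j$ with $a^m\leq b^{-j}<a^{m-1}$ and counts level-$j$ ancestors in $\mathcal G^{\,m}_j(A_\phi,y)$, whereas you apply the lemma directly with $N=n=j$ after absorbing $b^{-j}<a^j$ into $A$, which gives the same count $Cb^{(1-\theta_\phi)(j-p)}$.
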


\begin{proof}
Fix $\epsilon\in(0,\theta_\phi)$, $z_0=(x_0, W(x_0))\in G$,
and $R\in(0,1)$. Let
$$
p=\left\lceil-\log_bR\right\rceil
$$
and fix an integer $j\geq p$. Then
\begin{equation}
\label{eq:phi-choice-of-p}
b^{-p}\leq R<b^{-p+1}.
\end{equation}
Choose the unique integer $m\geq1$ such that
\begin{equation}
\label{eq:phi-Aikawa-shell-scales}
a^m
\leq
b^{-j}
<
a^{m-1}.
\end{equation}
If $j>m$, then $ab>1$ gives
$$
b^{-j}
<
b^{-m}
<
a^m
\leq
b^{-j},
$$
which is impossible. Thus, $j\leq m$.

Set
$$
A_\phi
=
\frac1a+
\frac{B_\phi}{1-a}.
$$
For $y\in\R$, set
$$
 E_y^j
=
\left\{
x\in\R:
(x,y)\in
B(z_0,R)\cap
\left(
 G_{b^{-j}}\setminus
 G_{b^{-j-1}}
\right)
\right\}.
$$to be the projection of the horizontal $y$-slice of $B(z_0,R)\cap
(G_{b^{-j}}\setminus G_{b^{-j-1}})$ into the $x$-axis.
If $x\in E_y^j$, then there is $(u_x, W(u_x))\in G$ with
$$
|(x,y)-(u_x, W(u_x))|<b^{-j},
$$
which implies
\begin{equation}
\label{eq:phi-choice-of-ux}
|x-u_x|<b^{-j}
\qquad\text{and}\qquad
|y- W(u_x)|<b^{-j}.
\end{equation}
Let $J\in\mathcal D_m$ be the unique level $m$ interval containing $u_x$. By
\eqref{eq:phi-tail-Pn}, \eqref{eq:phi-Aikawa-shell-scales}, and
\eqref{eq:phi-choice-of-ux}, we have
\begin{align}
\operatorname{dist}
\bigl(y, P_m(\overline J)\bigr)
&\leq
|y- W(u_x)|
+
| W(u_x)- P_m(u_x)|
\nonumber\\
&<
b^{-j}
+
\frac{B_\phi}{1-a}a^m
\nonumber\\
&<
\left(
\frac1a+\frac{B_\phi}{1-a}
\right)a^m
\nonumber\\
&=
A_\phi a^m.
\label{eq:phi-tube-good-interval}
\end{align}

By \eqref{eq:phi-choice-of-p}, $j\geq p$, and
\eqref{eq:phi-choice-of-ux}, we have
$$
|u_x-x_0|
\leq
|u_x-x|+|x-x_0|
<
b^{-j}+R
\leq
2R.
$$
The interval $[x_0-2R,x_0+2R]$ meets at most $10b$ intervals in
$\mathcal D_p$. Let $I\in\mathcal D_j$ be the level $j$ ancestor of
$J$. The level $p$ ancestor of $I$ is one of these at most $10b$
intervals. Moreover, \eqref{eq:phi-tube-good-interval} shows that
$$
I\in{\mathcal G}^{\,m}_j(A_\phi,y).
$$
Since $p\leq j\leq m$, applying
Lemma~\ref{Lem:phi-uniform-good-interval-estimate},  with
$N=j$ and $n=m$ on each of these level $p$ intervals shows that
$u_x$ lies in the union of at most
$$
10bC_0b^{(1-\theta_\phi)(j-p)}
$$
intervals from $\mathcal D_j$. Denote this union by
${\mathcal U}=\cup_i U_i$.

Therefore, for every $x\in E_y^j$, there is 
$u_x\in{\mathcal U}$ such that $|x-u_x|<b^{-j}$. Thus,
$ E_y^j$ is contained in the $b^{-j}$-neighborhood of
${\mathcal U}$. Also, each $U_i$ has length $b^{-j}$, and its
$b^{-j}$-neighborhood has length at most $3b^{-j}$. Thus, we have
\begin{equation}
\label{eq:phi-Aikawa-shell-horizontal-estimate}
| E_y^j|
\leq
30bC_0b^{-j}
b^{(1-\theta_\phi)(j-p)}.
\end{equation}

On
$ G_{b^{-j}}\setminus G_{b^{-j-1}}$ we have
$$
\operatorname{dist}(z, G)
\geq
b^{-j-1},
$$
and hence
$$
\operatorname{dist}(z, G)^{-\epsilon}
\leq
b^{(j+1)\epsilon}.
$$
Moreover, if $ E_y^j$ is nonempty, then $|y- W(x_0)|<R$. Therefore,
by 
\eqref{eq:phi-Aikawa-shell-horizontal-estimate} we have
\begin{align*}
&
\int_{B(z_0,R)\cap
\left(
 G_{b^{-j}}\setminus
 G_{b^{-j-1}}
\right)}
\operatorname{dist}(z, G)^{-\epsilon}\,dz
\\
&\qquad\leq
b^{(j+1)\epsilon}
\int_{ W(x_0)-R}^{ W(x_0)+R}
| E_y^j|\,dy
\\
&\qquad\leq
60b^{1+\epsilon}C_0R
b^{-(1-\theta_\phi)p}
b^{-j(\theta_\phi-\epsilon)},
\end{align*}
which completes the proof.
\end{proof}

We now prove a quantitative version of Theorem \ref{Thm:Aikawa-phi-Weierstrass}.

\begin{Thm}
\label{Thm:Aikawa-Weierstrass QUANT}
Suppose $a\in (0,1)$ and $b\geq2$ is an integer with $ab>1$. Let
$\phi:\R\to\R$ be a Lipschitz $1$-periodic function satisfying a $b$-adic 
\eqref{eq:phi-finite-block-transversality} condition. Then
$$
\dim_A G(W_{a,b}^\phi)
\leq
2-\theta_\phi<2,
$$
where $\theta_\phi\in(0,1)$ is as in
\eqref{eq:phi-def-sigma-theta}.
\end{Thm}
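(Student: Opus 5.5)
The plan is to verify the Aikawa condition \eqref{eq:Aikawa-condition} of Theorem~\ref{Thm: Aikawa-Assouad-lehr} for $E=G$ and every $\epsilon\in(0,\theta_\phi)$. Then $\dim_A G\le 2-\epsilon$ for all such $\epsilon$, and letting $\epsilon\uparrow\theta_\phi$ gives $\dim_A G\le 2-\theta_\phi$. So fix $\epsilon\in(0,\theta_\phi)$, $z_0\in G$ and $0<R<\diam G$. Small radii are the main case; for $R$ comparable to $1$ (note $\diam G$ is finite, bounded in terms of $B_\phi$) we will absorb everything into the constant, as explained below. Let $p=\lceil -\log_b R\rceil$, so $b^{-p}\le R<b^{-p+1}$. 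We split $B(z_0,R)$ into the far region $B(z_0,R)\setminus G_{b^{-p}}$ and the shells $B(z_0,R)\cap(G_{b^{-j}}\setminus G_{b^{-j-1}})$ for $j\ge p$. These cover the ball up to the set $G$ itself, which has planar measure zero since it is a continuous graph.

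On the far region we have $\dist(z,G)\ge b^{-p}>R/b$. Hence the integrand is at most $b^{\epsilon}R^{-\epsilon}$, and that piece contributes at most $\pi b^{\epsilon}R^{2-\epsilon}$.

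For the shells we invoke Lemma~\ref{Lem:phi-uniform-Aikawa-shell-estimate} and sum over $j\ge p$. Since $\theta_\phi-\epsilon>0$, the series is geometric:
$$\sum_{j\ge p} Cb^{1+\epsilon}Rb^{-(1-\theta_\phi)p}b^{-j(\theta_\phi-\epsilon)} = \frac{Cb^{1+\epsilon}}{1-b^{-(\theta_\phi-\epsilon)}}\,R\,b^{-(1-\theta_\phi)p}b^{-p(\theta_\phi-\epsilon)} = C'\,R\,b^{-p(1-\epsilon)}.$$
Using $b^{-p}\le R$, this is at most $C'R^{2-\epsilon}$, with $C'$ depending only on $a,b,\phi,\epsilon$. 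This is the key step. The exponents $(1-\theta_\phi)p$ and $p(\theta_\phi-\epsilon)$ combine exactly to $p(1-\epsilon)$, which is why the lemma's form was chosen. Adding the two pieces yields \eqref{eq:Aikawa-condition}.

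The main obstacle is the range of $R$. Lemma~\ref{Lem:phi-uniform-Aikawa-shell-estimate} is stated only for $R\in(0,1)$, while $\diam G$ may exceed $1$. For $1\le R<\diam G$ we first try to bound the integral over $B(z_0,R)$ by a fixed constant. We can write $B(z_0,R)$ as covered by boundedly many balls $B(z_i,1/2)$ with $z_i\in G$, plus a region at distance $\gtrsim 1$ from $G$. Since $\diam G$ is bounded, the number of such balls depends only on $a,b,\phi$. We then apply the small-$R$ estimate to each ball. The region far from $G$ contributes at most a constant times $R^2$, and $R^2\le(\diam G)^{\epsilon}R^{2-\epsilon}$. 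Alternatively, we check that the proof of the shell lemma only used $j\ge p\ge 0$ and therefore extends. Either way the constant changes only by a factor depending on $a,b,\phi$, and the conclusion $\dim_A G(W^\phi_{a,b})\le 2-\theta_\phi<2$ follows from Theorem~\ref{Thm: Aikawa-Assouad-lehr}.
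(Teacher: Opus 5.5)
Your proposal is correct and follows the paper's proof essentially step for step: the same split at scale $p=\lceil-\log_b R\rceil$ into a far region and shells, the same geometric summation of Lemma~\ref{Lem:phi-uniform-Aikawa-shell-estimate} giving $Rb^{-p(1-\epsilon)}\le R^{2-\epsilon}$, and the same finite-cover argument for $1\le R<\diam G$ (the paper uses balls $B(z_i,2b^{-2})$ around a finite cover of $G$ by balls $B(z_i,b^{-2})$). You should drop your ``alternative'' of extending the shell lemma to large $R$: when $R\ge b$, which can happen if $\diam G>b$, one has $p<0$, and both that lemma (which needs an integer $m\ge1$ with $a^m\le b^{-j}$) and Lemma~\ref{Lem:phi-uniform-good-interval-estimate} (which needs $p\ge0$) no longer apply.
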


\begin{proof}
Let $A_\phi$ and $C_0$ be as in
Lemma~\ref{Lem:phi-uniform-Aikawa-shell-estimate}.
Due to Theorem~\ref{Thm: Aikawa-Assouad-lehr}, it is enough to show
that for every $0<\epsilon<\theta_\phi$, there is
$C_\epsilon\geq1$, depending only on $a,b,\phi,\epsilon$, such that
\begin{equation}
\label{eq:phi-final-Aikawa-estimate}
\int_{B(z_0,R)}
\operatorname{dist}(z, G)^{-\epsilon}\,dz
\leq
C_\epsilon R^{2-\epsilon},
\end{equation}
for every $z_0\in G$ and every
$0<R<\operatorname{diam} G$.

Fix $z_0\in G$ and suppose first that $0<R<1$. Let
$$
p
=
\left\lceil-\log_bR\right\rceil,
$$
so that
$$
b^{-p}
\leq
R
<
b^{-p+1}.
$$
Since $z_0\in G$, the above yields
$$
\operatorname{dist}(z, G)
\leq
|z-z_0|
<
R
<
b^{-p+1},
$$
for every $z\in B(z_0,R)$. For every integer $N>p$, the sets
$$
 G_{b^{-j}}\setminus
 G_{b^{-j-1}},
\qquad
p-1\leq j<N,
$$
are pairwise disjoint, and
$$
\bigcup_{j=p-1}^{N-1}
\left(
 G_{b^{-j}}\setminus
 G_{b^{-j-1}}
\right)
=
 G_{b^{-p+1}}\setminus
 G_{b^{-N}}.
$$
By an application of the monotone convergence theorem, and the fact that
$\mathcal L^2( G)=0$, we ahve
\begin{align}
&
\int_{B(z_0,R)}
\operatorname{dist}(z, G)^{-\epsilon}\,dz
\nonumber\\
&\qquad=
\int_{B(z_0,R)\cap
\left(
 G_{b^{-p+1}}\setminus
 G_{b^{-p}}
\right)}
\operatorname{dist}(z, G)^{-\epsilon}\,dz
\nonumber\\
&\qquad\quad+
\lim_{N\to\infty}
\sum_{j=p}^{N-1}
\int_{B(z_0,R)\cap
\left(
 G_{b^{-j}}\setminus
 G_{b^{-j-1}}
\right)}
\operatorname{dist}(z, G)^{-\epsilon}\,dz.
\label{eq:phi-Aikawa-shell-decomposition}
\end{align}

On
$ G_{b^{-p+1}}\setminus G_{b^{-p}}$ we have
$$
\operatorname{dist}(z, G)^{-\epsilon}
\leq
b^{p\epsilon}.
$$
Since $R<b^{-p+1}$, it follows that
\begin{align*}
&
\int_{B(z_0,R)\cap
\left(
 G_{b^{-p+1}}\setminus
 G_{b^{-p}}
\right)}
\operatorname{dist}(z, G)^{-\epsilon}\,dz
\\
&\qquad\leq
\pi R^2b^{p\epsilon}
\leq
\pi b^\epsilon R^{2-\epsilon}.
\end{align*}

For every integer $N>p$,
Lemma~\ref{Lem:phi-uniform-Aikawa-shell-estimate} gives
\begin{align*}
&
\sum_{j=p}^{N-1}
\int_{B(z_0,R)\cap
\left(
 G_{b^{-j}}\setminus
 G_{b^{-j-1}}
\right)}
\operatorname{dist}(z, G)^{-\epsilon}\,dz
\\
&\qquad\leq
60b^{1+\epsilon}C_0R
b^{-(1-\theta_\phi)p}
\sum_{j=p}^{N-1}b^{-j(\theta_\phi-\epsilon)}
\\
&\qquad\leq
\frac{60b^{1+\epsilon}C_0}
{1-b^{-(\theta_\phi-\epsilon)}}
R b^{-p(1-\epsilon)}
\\
&\qquad\leq
\frac{60b^{1+\epsilon}C_0}
{1-b^{-(\theta_\phi-\epsilon)}}
R^{2-\epsilon},
\end{align*}
where the last inequality follows from $b^{-p}\leq R$. Letting
$N\to\infty$ in this estimate and using
\eqref{eq:phi-Aikawa-shell-decomposition} proves
\eqref{eq:phi-final-Aikawa-estimate} for $R<1$, with a constant
$C_\epsilon'\geq1$ depending only on $a,b,\phi,\epsilon$.

Suppose now that
$$
1\leq R<\operatorname{diam} G.
$$
Since $ G$ is compact, there are
$$
z_1,\ldots,z_{M_W}\in G
$$
such that
$$
 G
\subset
\bigcup_{i=1}^{M_W}B(z_i,b^{-2}).
$$
The integer $M_W$ depends only on the fixed parameters
$a,b,\phi$. It follows that
$$
 G_{b^{-2}}
\subset
\bigcup_{i=1}^{M_W}B(z_i,2b^{-2}).
$$
Since $b\geq2$, we have $2b^{-2}<1$. We may therefore use
\eqref{eq:phi-final-Aikawa-estimate}, which we have proved
for all radii less than $1$. Thus,
\begin{align*}
\int_{B(z_0,R)\cap G_{b^{-2}}}
\operatorname{dist}(z, G)^{-\epsilon}\,dz
&\leq
\sum_{i=1}^{M_W}
\int_{B(z_i,2b^{-2})}
\operatorname{dist}(z, G)^{-\epsilon}\,dz
\\
&\leq
M_W C_\epsilon'(2b^{-2})^{2-\epsilon}
\\
&\leq
M_W C_\epsilon'(2b^{-2})^{2-\epsilon}
R^{2-\epsilon}.
\end{align*}
On the other hand, if
$z\in B(z_0,R)\setminus G_{b^{-2}}$, then
$\operatorname{dist}(z, G)\geq b^{-2}$. Therefore,
due to $R<\operatorname{diam} G$ we have
\begin{align*}
\int_{B(z_0,R)\setminus G_{b^{-2}}}
\operatorname{dist}(z, G)^{-\epsilon}\,dz
&\leq
\pi b^{2\epsilon}R^2
\\
&\leq
\pi b^{2\epsilon}
(\operatorname{diam} G)^\epsilon
R^{2-\epsilon}.
\end{align*}
Adding the above two inequalities proves \eqref{eq:phi-final-Aikawa-estimate}  for $R\geq1$.

Therefore,
$$
\dim_A G
\leq
2-\epsilon
$$
for every $0<\epsilon<\theta_\phi$. Letting
$\epsilon\to\theta_\phi$ completes the proof.
\end{proof}

\begin{Rem}
    One could follow the proof of Theorem A from \cite{LehrTuomAikawaAssouad_ORIG} and appropriately adjust the arguments above to obtain suitable covers for the graph $G(W)$, which could be used to prove Theorem \ref{Thm:Aikawa-Weierstrass QUANT} using the Assouad dimension definition involving the covering number $N_r(B(x,R)\cap G)$. However, this approach would not result in a sharper upper bound. Moreover, the main counting argument that led to this proof was motivated by the notion of Aikawa dimension and we therefore find it  more appropriate to include the proof above instead of reproducing the proof of Theorem A to employ covering arguments.
\end{Rem}

\section{Assouad dimension and porosity of horizontal slices}
\label{sec: dim level sets}

In this section we prove a quantitative version of Theorem \ref{Thm:uniform-level-set-estimate-QUAL}, which also establishes the porosity of all horizontal slices. Let $a,b,\phi$, $W=W_{a,b}^\phi$, $G=G(W)$ be as in the previous section, and let $\theta=\theta_\phi\in (0,1)$ be as in \eqref{eq:phi-def-sigma-theta}. The main tool is again the bound on the number of Aikawa intervals in Lemma~\ref{Lem:phi-uniform-good-interval-estimate}, but in this case we employ the usual definition of Assouad dimension.  For
$y\in\mathbb R$, we write
$$
\mathcal W_y
=
\{x\in[0,1]:W(x)=y\}
$$ for the horizontal $y$-slice of $W$.

\begin{Thm}
\label{Thm:uniform-level-set-estimate}
Suppose $a\in (0,1)$ and $b\in\mathbb N$ is such that $ab>1$, and $\phi:\R\to \R$ is a Lipschitz, $1$-periodic function with a $b$-adic (PD) condition. For every
$y\in\mathbb R$, we have
$$
\dim_A \mathcal W_y\leq1-\theta,
$$
where $\theta=\theta_\phi\in(0,1)$ is as in \eqref{eq:phi-def-sigma-theta}. In particular, $\mathcal W_y$ is a porous subset of $\R$.
\end{Thm}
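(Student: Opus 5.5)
We will work directly with the covering-number definition of the Assouad dimension. The goal is to show that there is $C>0$ such that for every $y\in\R$, every $x_0\in\mathcal W_y$ and all $0<r\le R$,
$$
N_r\bigl(B(x_0,R)\cap\mathcal W_y\bigr)\le C(R/r)^{1-\theta}.
$$
The main input is Lemma~\ref{Lem:phi-uniform-good-interval-estimate}, applied with $A=A_\phi=\frac1a+\frac{B_\phi}{1-a}$, exactly as in Lemma~\ref{Lem:phi-uniform-Aikawa-shell-estimate}. Since every $\mathcal W_y\subset[0,1]$ is bounded, it suffices to treat $R<1$; the range $R\ge 1$ reduces to $R<1$ after replacing $R$ by $1$, at the cost of a bounded factor. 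We also take $r<R$, which is harmless.

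First we fix scales. Let $p=\lceil-\log_b R\rceil$ and $j=\lceil -\log_b r\rceil$, so that $b^{-p}\le R<b^{-p+1}$, $b^{-j}\le r<b^{-j+1}$, and $p\le j$. Choose $m\ge 1$ with $a^m\le b^{-j}<a^{m-1}$; as in the proof of Lemma~\ref{Lem:phi-uniform-Aikawa-shell-estimate}, $ab>1$ forces $j\le m$.

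Next we show that each point of the level set lies in a good interval. Take $u\in\mathcal W_y$ and let $J\in\mathcal D_m$ be the interval containing $u$. Then \eqref{eq:phi-tail-Pn} gives
$$
\dist\bigl(y,P_m(\overline J)\bigr)\le |W(u)-P_m(u)|\le \tfrac{B_\phi}{1-a}a^m\le A_\phi a^m,
$$
so $J$ is an $(A_\phi,y)$-Aikawa interval of level $m$. By the hereditary property, the level-$j$ ancestor $I$ of $J$ belongs to $\mathcal G^{\,m}_j(A_\phi,y)$.

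Now we count. If $u\in B(x_0,R)$, then $u\in[x_0-R,x_0+R]$, and this interval meets at most $2b+2$ intervals of $\mathcal D_p$. For each of these $K\in\mathcal D_p$, we apply Lemma~\ref{Lem:phi-uniform-good-interval-estimate} with the triple $p\le N=j\le n=m$. It shows that $B(x_0,R)\cap\mathcal W_y$ is covered by at most $(2b+2)C\,b^{(1-\theta)(j-p)}$ intervals of $\mathcal D_j$. Each of these has length $b^{-j}\le r$, so each is a set of diameter at most $r$. Finally,
$$
b^{j-p}\le b\cdot \frac{b^{-p}}{r}\cdot\frac{r}{b^{-j}}\cdot\frac1b\le b\,R/r,
$$
using $b^{-p}\le R$ and $r<b^{-j+1}$. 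This yields the bound $N_r\le (2b+2)Cb\,(R/r)^{1-\theta}$, and the constant is uniform in $y$ and $x_0$. Hence $\dim_A\mathcal W_y\le 1-\theta$. Porosity then follows from Luukkainen's theorem, since $1-\theta<1$; if $\mathcal W_y$ is empty, the statement is vacuous.

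The main obstacle is already handled by Lemma~\ref{Lem:phi-uniform-good-interval-estimate}: the essential point is that its constant depends only on $A$ and not on $y$. The remaining care is in choosing the scales so that the lemma's ordering hypothesis $p\le j\le m$ holds. This requires taking $m$ from $a^m\approx b^{-j}$, which gives Aikawa tolerance comparable to $a^m$; one should not take $m=j$, since the tail error $a^m$ would then be too large relative to $a^j$ in the lemma's normalization.
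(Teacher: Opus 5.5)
Your proof is correct and follows the paper's argument: you cover $B(x_0,R)\cap\mathcal W_y$ by the good level-$j$ intervals lying inside the $O(b)$ level-$p$ intervals that meet $B(x_0,R)$, and you count them with the $y$-uniform bound of Lemma~\ref{Lem:phi-uniform-good-interval-estimate}. The one difference is your detour through the level $m$ with $a^m\le b^{-j}$ and $A_\phi=\frac1a+\frac{B_\phi}{1-a}$, which is harmless but unnecessary, and your closing remark that one should not take $m=j$ is mistaken: the paper takes $n=N=j$ with $A=B_\phi/(1-a)$, because $W(u)=y$ exactly gives $\dist\bigl(y,P_j(\overline J)\bigr)\le \frac{B_\phi}{1-a}a^j$, and the level-$m$ device is only needed in Lemma~\ref{Lem:phi-uniform-Aikawa-shell-estimate}, where the point can lie at vertical distance up to $b^{-j}\gg a^j$ from the graph.
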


\begin{proof}
Fix $y\in\mathbb R$. If $\mathcal W_y=\emptyset$ then the inequality is trivially true, so suppose $\mathcal W_y\neq \emptyset$. Set
$$
A=\frac{B_\phi}{1-a},
$$
and let $C_0\geq1$ be the constant in
Lemma~\ref{Lem:phi-uniform-good-interval-estimate} corresponding to the chosen
$A$.  This constant is independent of $y$.  We aim to show that 
\begin{equation}
\label{eq:uniform-level-set-estimate}
N_r\bigl(B(x,R)\cap \mathcal W_y\bigr)
\leq
C\left(\frac Rr\right)^{1-\theta},
\end{equation} for any $x\in \mathcal W_y$, and any $0<r\leq R$. Note that $B(x,R)$ is the interval $(x-R, x+R)$, since $\mathcal W_y\subset \R$.

Let $x\in \mathcal W_y$, and first suppose that
$0<r\leq R<1$.  Let $p,n\in\mathbb N$ be the unique integers such
that
$$
b^{-p}\leq R<b^{-p+1}
\qquad\text{and}\qquad
b^{-n}\leq r<b^{-n+1}.
$$
Since $r\leq R$, we have $p\leq n$.  The interval
$B(x,R)\subset\mathbb R$ meets at most $10b$ intervals from
$\mathcal D_p$.  Indeed, its length is $2R<2b^{-p+1}$, and every
interval in $\mathcal D_p$ has length $b^{-p}$.

Let $J\in\mathcal D_n$ satisfy
$$
J\cap B(x,R)\cap \mathcal W_y\neq\varnothing,
$$
and choose $u$ in this intersection.  By \eqref{eq:phi-tail-Pn},
$$
\operatorname{dist}\bigl(y,P_n(\overline J)\bigr)
\leq
|y-P_n(u)|
=
|W(u)-P_n(u)|
\leq
\frac{B_\phi a^n}{1-a}
=
Aa^n.
$$
Consequently, $J\in\mathcal G_n^n(A,y)$.  The level $p$ ancestor
of $J$ is one of the at most $10b$ intervals from $\mathcal D_p$ that
meet $B(x,R)$.  Applying
Lemma~\ref{Lem:phi-uniform-good-interval-estimate} to each of these
level $p$ intervals, with $N=n$, shows that the number of such intervals $J$ is at most
$$
10bC_0b^{(1-\theta)(n-p)},
$$
where $C_0$ is the constant that corresponds to the choice $A=B_\phi(1-a)^{-1}$ in Lemma~\ref{Lem:phi-uniform-good-interval-estimate}. These intervals cover $B(x,R)\cap \mathcal W_y$, and each of them has diameter
$b^{-n}\leq r$.  Moreover, by choice of $p,n$ we have
$$
b^{n-p}
<
\frac br R.
$$
It follows that
$$
N_r\bigl(B(x,R)\cap \mathcal W_y\bigr)
\leq
10b^{2-\theta}C_0
\left(\frac Rr\right)^{1-\theta}.
$$

It remains to consider $R\geq1$.  If $r\geq3/4$, then the two
intervals $[0,3/4], [1/4,1]$ cover $B(x,R)\cap \mathcal W_y$, so suppose $0<r<3/4$. Note that $\mathcal W_y$ is closed, and so there are $w_1, w_2\in \mathcal W_y$ such that 
$$
|w_1-w_2|=\diam \mathcal W_y\leq 1\leq R,
$$ which satisfy
$$
B(x,R)\cap \mathcal W_y\subset \mathcal W_y=(B(w_1,3/4)\cap \mathcal W_y)\bigcup (B(w_2,3/4)\cap \mathcal W_y).
$$ Therefore, the bound on the covering number from the previous case yields
$$
N_r\bigl(B(x,R)\cap \mathcal W_y\bigr)
\leq
20b^{2-\theta}C_0
\left(\frac {3/4}r\right)^{1-\theta}
\leq 20b^{2-\theta}C_0
\left(\frac Rr\right)^{1-\theta},
$$
which proves \eqref{eq:uniform-level-set-estimate} in the remaining case.

The fact that $\mathcal W_y$ is a porous subset of $\R$ follows directly by \cite[Theorem 5.2]{Luukkainen_porous}.
\end{proof}

\section{Assouad dimension of classical Weierstrass and Takagi functions}\label{sec: dimA cosine W}

In this section we  restrict our attention to classical Weierstrass and Takagi functions, and prove Theorems \ref{Thm:Aikawa-Weierstrass} and \ref{Thm: Takagi Conj} by employing Theorem \ref{Thm:Aikawa-Weierstrass QUANT}. 
We do so by establishing that the cosine and sawtooth generating functions satisfy appropriate $b$-adic (PD) conditions.
We then restrict to the parameter range $ab\geq \pi+1$ to establish appropriate $b$-adic (PD) conditions and the needed data in a constructive way. This yields explicit upper bounds for these cases by reproducing the proof of Theorem~\ref{Thm:Aikawa-Weierstrass QUANT} with explicit data.

\begin{proof}[Proof of Theorem \ref{Thm:Aikawa-Weierstrass}]
    We show that $\phi:\R\to \R$ with $\phi(t)=\cos(2\pi t)$ for all $t\in \R$ satisfies a $b$-adic (PD) condition with data $(n,1,0)$, where $n\in \N$ is a large integer depending only on $a,b$. Note that
    $$
    |P_n(b^{-n})-P_n(0)|=\sum_{j=0}^{n-1} a^j (1-\cos(2\pi b^j b^{-n})\geq a^{n-1}(1-\cos(2\pi b)).
    $$ Hence, it is enough to pick $n\in \N$ so that
    $$
    a^{n-1}(1-\cos(2\pi b))>\frac{\operatorname{Lip (\phi)}}{ab-1} \,b^{-n},
    $$ or, equivalently, so that
    $$
    (ab)^{n}>\frac{2\pi a}{(ab-1)(1-\cos(2\pi b))},
    $$ which is true for a sufficiently large $n\in \N$. The proof is then complete by an application of Theorem \ref{Thm:Aikawa-Weierstrass QUANT}.

\end{proof}

\begin{proof}[Proof of Theorem \ref{Thm: Takagi Conj}]
    Similarly to the proof of Theorem \ref{Thm:Aikawa-Weierstrass}, it can be shown that the generating function $\phi(t)=\dist(t, \Z):\R\to \R$ satisfies a $b$-adic (PD) condition with data $(n,1,0)$ for a sufficiently large $n$ that depends only on $a,b$. This is enough to complete the proof due to Theorem \ref{Thm:Aikawa-Weierstrass QUANT}.
\end{proof}

While the proofs of Theorems \ref{Thm:Aikawa-Weierstrass} and \ref{Thm: Takagi Conj} given above are based on the choice of some large integer $n$, we now include explicit upper bounds for $\dim_AG(W_{a,b})$ and $\dim_A G(T_{a,b})$ in the ranges $ab\geq \pi +1$ and $ab\geq 3$, respectively, by following a constructive argument. Note that the estimate in Theorem~\ref{Thm:Aikawa-Weierstrass QUANT}
depends on the data in the $b$-adic \eqref{eq:phi-finite-block-transversality}
condition and on the subsequent choices of $I_\phi$ and $L$. For the cosine
and sawtooth generating functions we make these choices  explicitly.
This yields the following bounds that depend solely on the parameters $a$ and
$b$.

\begin{Thm}
\label{Thm:explicit-cosine-Assouad-bound}
Suppose $a\in (0,1)$ and $b$ is an integer such that
$ab\geq\pi+1$. Then $\dim_A G(W_{a,b})$ is at most or equal to
$$
2-\frac{\log(13/12)}{\log b}\cdot
\left\lceil
\max\left\{
2,
\frac{\log\left(
\frac{\displaystyle
8\left(
ab-1+2\pi(1-a)
\right)}
{\displaystyle
(1-a)
\left(
(ab-1)
\sin\left(
\frac{\pi\lfloor b/3\rfloor}{b}
\right)
-
\frac{\pi\lfloor b/3\rfloor}{b}
\right)
}
\right)}{\log(1/a)}
\right\}
\right\rceil^{-1},
$$
with the above quantity  being strictly less than $2$.
\end{Thm}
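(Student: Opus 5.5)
The plan is to rerun the proof of Theorem~\ref{Thm:Aikawa-Weierstrass QUANT} for $\phi(t)=\cos(2\pi t)$ with explicit data. The proofs of Lemmas~\ref{Lem:phi-L-level-uniform-good-interval-estimate} and \ref{Lem:phi-uniform-good-interval-estimate} and of Theorem~\ref{Thm:Aikawa-Weierstrass QUANT} use the (PD) data only through four ingredients:
\begin{itemize}
\item a closed interval $I_\phi\subset(0,1-k_\phi b^{-n_\phi})$ with $|I_\phi|<k_\phi b^{-n_\phi}$;
\item some $\eta>0$ for which the uniform inequality \eqref{eq:phi-uniform-finite-block-transversality} holds on $I_\phi$;
\item an integer $L\ge n_\phi$ satisfying \eqref{eq:phi-choice-of-L};
\item the resulting $\theta_\phi$ from \eqref{eq:phi-def-sigma-theta}.
\end{itemize}
I will therefore replace the continuity choice of $I_\phi$ and the specific value of $\eta$ by explicit ones. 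The target formula forces the choices. The factor $\log(13/12)$ means $1-|I_\phi|/2=12/13$, so $|I_\phi|=2/13$. The ceiling expression is the smallest admissible $L$.

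I take $n_\phi=1$, $k_\phi=k:=\lfloor b/3\rfloor$, $\operatorname{Lip}\phi=2\pi$, $B_\phi=1$ and $\kappa_\phi=2\pi/(ab-1)$. Since $ab\ge \pi+1>4$ and $a<1$, we have $b\ge5$, hence $k\ge1$ and $k/b\in[1/5,1/3]$. The key identity is
$$
F(t)=|\cos(2\pi(t+k/b))-\cos(2\pi t)| = 2\sin(\pi k/b)\,|\sin(2\pi t+\pi k/b)|.
$$
I let $I_\phi$ be the closed interval of length $2/13$ centred at $t_*=1/4-k/(2b)$, where the second sine factor equals $1$. On $I_\phi$ we have $|2\pi t+\pi k/b-\pi/2|\le 2\pi/13$, so $F(t)\ge 2\cos(2\pi/13)\sin(\pi k/b)$. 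For the containment, $t_*\ge 1/4-1/6=1/12>1/13$, and $t_*+1/13<1-k/b$ because $k/b\le1/3$. Also $2/13<1/5\le k/b$, so $|I_\phi|<k_\phi b^{-n_\phi}$.

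Next I set
$$
\eta=\frac{(ab-1)\sin(\pi k/b)-\pi k/b}{2(ab-1)},
$$
which should be exactly the quantity producing the factor $8$ in the statement. It is positive, since $\sin x\ge \tfrac{3\sqrt3}{2\pi}x$ on $[0,\pi/3]$ and $ab-1\ge\pi$. The claim \eqref{eq:phi-uniform-finite-block-transversality}, namely $F\ge \kappa_\phi k/b+3\eta$ on $I_\phi$, reduces after multiplying by $ab-1$ to
$$
\bigl(2\cos(2\pi/13)-\tfrac32\bigr)(ab-1)\sin(\pi k/b)\ \ge\ \tfrac12\,\pi k/b .
$$
This follows from $2\cos(2\pi/13)-3/2>0.27$, from $\sin(\pi k/b)\ge 0.82\,\pi k/b$, and from $ab-1\ge\pi$.

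Finally I check \eqref{eq:phi-choice-of-L}. For the first condition, $b^L\cdot 2/13\ge 2$ holds once $L\ge2$, because $b\ge5$. For the second condition,
$$
\frac{B_\phi}{1-a}+\kappa_\phi=\frac{ab-1+2\pi(1-a)}{(1-a)(ab-1)},
$$
so $a^L(\cdots)\le\eta/4$ is equivalent to
$$
L\ \ge\ \frac{\log\!\Big(\dfrac{8(ab-1+2\pi(1-a))}{(1-a)\big((ab-1)\sin(\pi k/b)-\pi k/b\big)}\Big)}{\log(1/a)}.
$$
Hence the ceiling in the statement is an admissible $L$, and $\theta_\phi=\log(13/12)/(L\log b)>0$. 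Theorem~\ref{Thm:Aikawa-Weierstrass QUANT}, rerun with these data, then gives the bound.

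The main obstacle is making sure the lemmas really go through with this $\eta$ in place of the paper's $\eta=\frac14(F(t_0)-\kappa_\phi k_\phi b^{-n_\phi})$. I expect they do, because only the uniform inequality on $I_\phi$ and the disjointness of $I_\phi$ and $I_\phi+k_\phi b^{-n_\phi}$ are used. The secondary difficulty is the numerical inequality $F\ge\kappa_\phi k/b+3\eta$ on all of $I_\phi$, where the hypothesis $ab\ge\pi+1$ is essential.
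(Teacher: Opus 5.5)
Your proposal is correct and matches the paper's proof essentially step for step. It uses the same data $n_\phi=1$, $k_\phi=\lfloor b/3\rfloor$, $t_0=\frac14-\frac{k_\phi}{2b}$, the same $I_\phi=[t_0-\frac1{13},t_0+\frac1{13}]$ and the same ceiling for $L$, differing only cosmetically in the numerics for \eqref{eq:phi-uniform-finite-block-transversality}, where the paper uses $\cos(2\pi/13)>7/8$, $\kappa_\phi\le 2$ and $\sin(\pi k_\phi/b)\ge 2k_\phi/b$; the obstacle you flag does not arise, because $F(t_0)=2\sin(\pi k_\phi/b)$ makes your $\eta$ equal to $\frac14\bigl(F(t_0)-\kappa_\phi k_\phi/b\bigr)$, which is exactly the $\eta$ of Section~\ref{sec: counting intervals}.
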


\begin{proof}
Set
$$
\phi(t)=\cos(2\pi t),
\qquad
k_\phi=\lfloor b/3\rfloor, \qquad t_0=\frac14-\frac{k_\phi}{2b}.
$$
Since $a<1$ and $ab\geq\pi+1$, we have $b\geq5$. Moreover,
$$
\frac15\leq\frac{k_\phi}{b}\leq\frac13.
$$
Indeed, the upper bound is immediate, while
$\lfloor b/3\rfloor\geq(b-2)/3\geq b/5$ for $b\geq5$.
It follows that
$$
\frac1{12}\leq t_0\leq\frac3{20},
$$
and hence $t_0\in[0,1-k_\phi/b]$.

Recall that $P_1=\phi$ and
$$
\kappa_\phi=\frac{2\pi}{ab-1}.
$$
We have
\begin{align}
\left|
P_1\left(t_0+\frac{k_\phi}{b}\right)-P_1(t_0)
\right|
&=
\left|
\cos\left(\frac\pi2+\frac{\pi k_\phi}{b}\right)
-
\cos\left(\frac\pi2-\frac{\pi k_\phi}{b}\right)
\right| \nonumber \\
&=
2\sin\left(\frac{\pi k_\phi}{b}\right).\label{eq: explicit weier P}
\end{align}
Since $ab-1\geq\pi$, we have $\kappa_\phi\leq2$. Also, the
concavity of the sine function on $[0,\pi/2]$ gives
$$
\sin\left(\frac{\pi k_\phi}{b}\right)
\geq
\frac{2k_\phi}{b},
$$
which implies by $\kappa_\phi\leq2$ that
$$
2\sin\left(\frac{\pi k_\phi}{b}\right)
>
\frac{\kappa_\phi k_\phi}{b}.
$$
Thus, \eqref{eq: explicit weier P} and the above imply that $\phi$ satisfies a $b$-adic
\eqref{eq:phi-finite-block-transversality} condition with  data
$$
n_\phi=1,
\qquad
k_\phi=\lfloor b/3\rfloor,
\qquad
t_0=\frac14-\frac{\lfloor b/3\rfloor}{2b}.
$$

Following the notation in Section~\ref{sec: counting intervals}, set
$$
\eta
=
\frac14
\left(
2\sin\left(\frac{\pi k_\phi}{b}\right)
-
\frac{\kappa_\phi k_\phi}{b}
\right)
=
\frac12
\left(
\sin\left(\frac{\pi k_\phi}{b}\right)
-
\frac{\pi k_\phi}{b(ab-1)}
\right).
$$
We claim that the interval
$$
I_\phi
=
\left[
t_0-\frac1{13},
t_0+\frac1{13}
\right],
$$ is as in Section~\ref{sec: counting intervals}, satisfying \eqref{eq:phi-uniform-finite-block-transversality}.
Note that
$$
t_0-\frac1{13}
\geq
\frac1{12}-\frac1{13}
>0
$$
and
$$
t_0+\frac1{13}
\leq
\frac3{20}+\frac1{13}
<
\frac23
\leq
1-\frac{k_\phi}{b}.
$$
Thus, $I_\phi\subset(0,1-k_\phi/b)$. Moreover,
$$
|I_\phi|=\frac2{13}<\frac15\leq\frac{k_\phi}{b},
$$
so $I_\phi$ and $I_\phi+k_\phi/b$ are disjoint.

For every $t\in I_\phi$, the cosine difference formula gives
\begin{equation}\label{eq: explicit Weier PP}
    \left|
P_1\left(t+\frac{k_\phi}{b}\right)-P_1(t)
\right|
=
2\sin\left(\frac{\pi k_\phi}{b}\right)\cos(2\pi (t-t_0)).
\end{equation}
Since $|t-t_0|\leq1/13$, due to the elementary estimate
$\cos x\geq1-x^2/2$ we have
\begin{equation}\label{eq: explicit Weier cos}
    \cos(2\pi (t-t_0))
\geq
\cos(2\pi/13)
\geq
1-\frac{2\pi^2}{169}
>
\frac78.
\end{equation}
Moreover, the estimates $\kappa_\phi\leq2$ and
$\sin(\pi k_\phi/b)\geq2k_\phi/b$ imply
$$
\frac{\kappa_\phi k_\phi}{b}
\leq
\frac{2\sin\left(\frac{\pi k_\phi}{b}\right)}{2}.
$$
It follows by the above and \eqref{eq: explicit Weier PP}, \eqref{eq: explicit Weier cos} that
\begin{align*}
\left|
P_1\left(t+\frac{k_\phi}{b}\right)-P_1(t)
\right|
&\geq
\frac{7\left(2\sin\left(\frac{\pi k_\phi}{b}\right)\right)}{8}\\
&\geq
\frac{
3\left(2\sin\left(\frac{\pi k_\phi}{b}\right)\right)
+
\kappa_\phi k_\phi/b
}{4}\\
&=
\frac{\kappa_\phi k_\phi}{b}+3\eta,
\end{align*}
 for every $t\in I_\phi$.
Thus, $I_\phi$ satisfies
\eqref{eq:phi-uniform-finite-block-transversality}.

For the rest of the proof, denote by $L$ the integer
$$
\left\lceil
\max\left\{
2,
\frac{\displaystyle
\log\left(
\frac{\displaystyle
8\left(
ab-1+2\pi(1-a)
\right)}
{\displaystyle
(1-a)
\left(
(ab-1)
\sin\left(
\frac{\pi\lfloor b/3\rfloor}{b}
\right)
-
\frac{\pi\lfloor b/3\rfloor}{b}
\right)
}
\right)}
{\log(1/a)}
\right\}
\right\rceil.
$$
Since $b\geq5$, $L\geq2$, and $|I_\phi|=2/13$, we have
$$
b^L|I_\phi|
\geq
\frac{2b^2}{13}
>2.
$$
Furthermore, by $B_\phi=1$, $a\in (0,1)$, and the definition of $L$ we have
\begin{align*}
a^L
&\leq
\frac{
(1-a)
\left(
(ab-1)\sin\left(\frac{\pi k_\phi}{b}\right)
-
\frac{\pi k_\phi}{b}
\right)
}
{8\left(ab-1+2\pi(1-a)\right)}\\
&=
\frac{\displaystyle
\sin\left(\frac{\pi k_\phi}{b}\right)
-
\frac{\pi k_\phi}{b(ab-1)}
}
{\displaystyle
8\left(
\frac{1}{1-a}+\frac{2\pi}{ab-1}
\right)}.
\end{align*}
As a result,
\begin{align*}
a^L
\left(
\frac{B_\phi}{1-a}+\kappa_\phi
\right)
&\leq
\frac18
\left(
\sin\left(\frac{\pi k_\phi}{b}\right)
-
\frac{\pi k_\phi}{b(ab-1)}
\right)\\
&=
\frac{\eta}{4}.
\end{align*}
Therefore, $L$ satisfies \eqref{eq:phi-choice-of-L}. With these
explicit choices, \eqref{eq:phi-def-sigma-theta} becomes
$$
\theta_\phi
=
\frac{-\log(1-1/13)}{L\log b}
=
\frac{\log(13/12)}{L\log b}.
$$
By Theorem~\ref{Thm:Aikawa-Weierstrass QUANT} we have
$$
\dim_A G(W_{a,b})
\leq
2-\frac{\log(13/12)}{L\log b},
$$
and substituting the exact choice of $L$  yields
the desired inequality. 
\end{proof}

Similarly, using Theorem~\ref{Thm:Aikawa-Weierstrass QUANT} we obtain an explicit upper bound for the corresponding Takagi functions. Since the proof is almost identical to that of Theorem~\ref{Thm:explicit-cosine-Assouad-bound}, we mostly focus on the appropriate choice of data for the (PD) condition and the interval $I_\phi$.

\begin{Thm}
\label{Thm:explicit-Takagi-Assouad-bound}
Suppose $a\in (0,1)$ and $b>1$ is an integer such that $ab\geq3$. Then
$$
\dim_A G(T_{a,b})
\leq
2-\frac{\log(25/23)}{\log b}\cdot 
\left\lceil
\max\left\{
2,
\frac{\displaystyle
\log\left(
\frac{\displaystyle
8b(ab+1-2a)}
{\displaystyle
(1-a)\lfloor b/3\rfloor(ab-2)}
\right)}
{\log(1/a)}
\right\}
\right\rceil^{-1},
$$
with the quantity on the right-hand side being strictly less than $2$.
\end{Thm}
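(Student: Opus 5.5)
The plan is to apply Theorem~\ref{Thm:Aikawa-Weierstrass QUANT} to the generating function $\phi(t)=D(t)=\dist(t,\Z)$, for which $W_{a,b}^\phi=T_{a,b}$. As in the proof of Theorem~\ref{Thm:explicit-cosine-Assouad-bound}, I will make explicit choices of the (PD) data, of the interval $I_\phi$ and of $L$, and then read off $\theta_\phi$ from \eqref{eq:phi-def-sigma-theta}.

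\emph{Constants and (PD) data.} Here $\operatorname{Lip}(\phi)=1$ and $B_\phi=1/2$, so $\kappa_\phi=1/(ab-1)\leq 1/2$ because $ab\geq 3$. Since $a<1$, we have $b\geq 4$. I take $n_\phi=1$ and $k_\phi=\lfloor b/3\rfloor$. From $\lfloor b/3\rfloor\geq (b-2)/3$ and $b\geq4$ this gives
$$
\frac16\leq\frac{k_\phi}{b}\leq\frac13 .
$$
The key observation is that $D$ is linear with slope $1$ on $[0,1/2]$. Hence for every $t\in[0,1/2-k_\phi/b]$ we get exactly
$$
P_1(t+k_\phi/b)-P_1(t)=\frac{k_\phi}{b}.
$$
This exceeds $\kappa_\phi k_\phi/b$ precisely because $ab>2$. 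Any $t_0$ in this range therefore gives a $b$-adic (PD) condition, and
$$
\eta=\frac14\,\frac{k_\phi}{b}\left(1-\frac1{ab-1}\right)=\frac{k_\phi(ab-2)}{4b(ab-1)}.
$$

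\emph{Choice of $I_\phi$.} The target constant $\log(25/23)$ corresponds to $1-|I_\phi|/2=23/25$, that is $|I_\phi|=4/25$. I take $I_\phi$ to be a closed interval of length $4/25$ contained in the open interval $(0,1/2-k_\phi/b)$. This is possible since $1/2-k_\phi/b\geq 1/6>4/25$. Then:
\begin{itemize}
\item $I_\phi\subset(0,1-k_\phi/b)$;
\item $|I_\phi|=4/25<1/6\leq k_\phi/b$, so $I_\phi$ and $I_\phi+k_\phi/b$ are disjoint;
\item on $I_\phi$ the difference equals $k_\phi/b$. Since $4\eta=k_\phi/b-\kappa_\phi k_\phi/b$, this is at least $\kappa_\phi k_\phi/b+3\eta$, which is \eqref{eq:phi-uniform-finite-block-transversality}.
\end{itemize}
Unlike the cosine case, no approximation of a trigonometric factor is needed here, because the difference is constant on $I_\phi$.

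\emph{Choice of $L$.} Let $L$ be the ceiling in the statement. Since $L\geq 2$ and $b\geq4$,
$$
b^L|I_\phi|\geq \frac{16\cdot 4}{25}>2 .
$$
For the second condition in \eqref{eq:phi-choice-of-L}, I compute
$$
\frac{B_\phi}{1-a}+\kappa_\phi=\frac{ab+1-2a}{2(1-a)(ab-1)}.
$$
So $a^L\bigl(B_\phi/(1-a)+\kappa_\phi\bigr)\leq \eta/4=k_\phi(ab-2)/(16b(ab-1))$ is equivalent to
$$
a^L\leq \frac{(1-a)\lfloor b/3\rfloor (ab-2)}{8b(ab+1-2a)} .
$$
This is exactly what the logarithmic term in the definition of $L$ guarantees. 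Also $L\geq n_\phi=1$.

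\emph{Conclusion and main obstacle.} With these choices \eqref{eq:phi-def-sigma-theta} gives
$$
\theta_\phi=\frac{\log(25/23)}{L\log b},
$$
and Theorem~\ref{Thm:Aikawa-Weierstrass QUANT} yields the stated bound, which is strictly less than $2$ since $\theta_\phi>0$. The only delicate step is checking the numerical constraints on $I_\phi$ uniformly in $b\geq4$, namely $4/25<\min\{1/6,\ k_\phi/b,\ 1/2-k_\phi/b\}$. The lower bound $k_\phi/b\geq 1/6$ is what makes this work, and I would verify it for small $b$ (for example $b=4,5,8$) directly. The rest is algebra matching the expression for $L$.
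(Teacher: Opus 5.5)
Your proposal is correct and follows essentially the same route as the paper. The paper also takes $n_\phi=1$ and $k_\phi=\lfloor b/3\rfloor$, chooses $t_0=0$ and the specific length-$4/25$ interval $I_\phi=[1/150,1/6]$ inside the region where $D$ is linear (an instance of your choice), uses the same $L$, and concludes via Theorem~\ref{Thm:Aikawa-Weierstrass QUANT}. Your verification of the conditions, namely $1/6\le k_\phi/b\le 1/3$, the exact constant difference $k_\phi/b$, the value of $\eta$, and the algebra matching $L$, supplies the details the paper leaves to the reader.
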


\begin{proof}
Set $\phi(t)=\dist(t,\Z)$. Following similar arguments as in the proof of
Theorem~\ref{Thm:explicit-cosine-Assouad-bound}, it can be shown that
$\phi$ satisfies a $b$-adic
\eqref{eq:phi-finite-block-transversality} condition with data
$$
n_\phi=1,
\qquad
k_\phi=\lfloor b/3\rfloor,
\qquad
t_0=0.
$$ The corresponding integral can be chosen to be
$$
I_\phi
=
\left[
\frac1{150},
\frac16
\right],
$$ and the corresponding integer $L$ can be chosen to be
$$
\left\lceil
\max\left\{
2,
\frac{\displaystyle
\log\left(
\frac{\displaystyle
8b(ab+1-2a)}
{\displaystyle
(1-a)\lfloor b/3\rfloor(ab-2)}
\right)}
{\log(1/a)}
\right\}
\right\rceil.
$$

\end{proof}

For a Lipschitz $1$-periodic function $\phi$, we use henceforth the notation
$$
\operatorname{osc}\phi
=
\operatorname{osc}_{[0,1]}\phi.
$$
We finish this section by recording a sufficient condition for a given $\phi$ to satisfy a $b$-adic (PD) condition. This involves solely the Lipschitz constant and the oscillation of $\phi$, and is thus easier to check for a given generating function $\phi$.

\begin{Prop}
\label{Prop:phi-oscillation-implies-PD}
Suppose that $a\in (0,1)$ and $b\in\N$ satisfy $ab>1$, and let
$\phi:\R\to\R$ be Lipschitz and $1$-periodic. If
\begin{equation}
\label{eq:phi-oscillation-PD-condition}
\frac{\operatorname{Lip (\phi)}}{ab-1}
<
\frac{2\operatorname{osc}\phi}{1+a},
\end{equation}
then $\phi$ satisfies a $b$-adic (PD) condition.
\end{Prop}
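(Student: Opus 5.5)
The plan is to verify the (PD) condition with a single dominant top term, checking each configuration by hand. Below, $J:=n_\phi-1$ denotes the index of the top term, and $k_\phi$ and $t_0$ are chosen at the end. Write $h=k_\phi b^{-n_\phi}$ and split
$$
P_{n_\phi}(t_0+h)-P_{n_\phi}(t_0)
=
a^{J}\bigl[\phi(b^{J}(t_0+h))-\phi(b^{J}t_0)\bigr]
+
\bigl[P_{J}(t_0+h)-P_{J}(t_0)\bigr].
$$
By \eqref{eq:phi-Lipschitz-Pn} the second bracket is at most $\kappa_\phi(ab)^{J}h$ in absolute value. The first bracket will be made close to $a^J\operatorname{osc}\phi$ by aiming $b^Jt_0$ and $b^J(t_0+h)$ at an extremal pair of $\phi$.

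Concretely, pick $u,v\in[0,1]$ with $\phi(v)-\phi(u)=\operatorname{osc}\phi$, or with the roles swapped. By $1$-periodicity we may replace $v$ by $v\pm1$ and, by symmetry, assume $d:=v-u\pmod 1\in[0,1/2]$. The target is $b^{J}h=d$, but $b^Jh=k_\phi/b$ only ranges over multiples of $1/b$. So choose $k_\phi$ with $|k_\phi/b-d|\le 1/(2b)$; if $d$ is very small, so that $k_\phi=0$ would be forced, take $k_\phi=1$ instead, or use that the same estimate still goes through. Then set $t_0:=b^{-J}u$, shifted by an integer multiple of $b^{-J}$ if necessary so that $t_0\in[0,1-h]$; periodicity makes this shift harmless. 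With these choices,
$$
\bigl|\phi(b^{J}(t_0+h))-\phi(b^Jt_0)\bigr|
\ge
\operatorname{osc}\phi-\operatorname{Lip}(\phi)\,\frac{1}{2b},
$$
and, using $h=b^{-J}k_\phi/b$,
$$
\bigl|P_{J}(t_0+h)-P_{J}(t_0)\bigr|
\le
\kappa_\phi a^{J}\frac{k_\phi}{b}
\le
\kappa_\phi a^J\Bigl(d+\frac1{2b}\Bigr).
$$

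We need the resulting lower bound to exceed $\kappa_\phi h=\kappa_\phi a^J (ab)^{-J}k_\phi/b$. Dividing by $a^J$ and using $\operatorname{Lip}\phi=\kappa_\phi(ab-1)$ reduces the requirement to
$$
\operatorname{osc}\phi
>
\kappa_\phi\Bigl(\frac{ab-1}{2b}+d+\frac1{2b}\Bigr)
+\kappa_\phi (ab)^{-J}\Bigl(d+\frac1{2b}\Bigr)
=
\kappa_\phi\Bigl(\frac a2+d\Bigr)+O\bigl((ab)^{-J}\bigr).
$$
Since $d\le1/2$, the main term is at most $\kappa_\phi(1+a)/2$. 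The hypothesis \eqref{eq:phi-oscillation-PD-condition} says exactly that $\operatorname{osc}\phi>\kappa_\phi(1+a)/2$, so the strict inequality holds once $J$, and hence $n_\phi$, is large enough to absorb the $(ab)^{-J}$ error. This explains the shape of the constant $2/(1+a)$: the term $1/2$ comes from folding $d$ into $[0,1/2]$, and the term $a/2$ from the $b$-adic rounding error in the top term.

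I expect the main obstacle to be the bookkeeping in the discretization. First, the rounding of $d$ to $k_\phi/b$ must be done so that the errors add up to at most $\kappa_\phi a/2$, not twice that. I would first try to be careful that the rounding error enters once through $\operatorname{Lip}\phi$ in the top term, and once more through the $P_J$ bound only via the factor $k_\phi/b$ rather than $d$. If that doubles the loss, I would instead round $d$ so that $k_\phi/b\le d$, that is, round down, and absorb the $1/b$ loss in the top term alone, since $\operatorname{Lip}\phi/b\cdot(\text{rounding})\le \kappa_\phi a$ times the rounding fraction. Second, the edge cases need handling: $k_\phi\ge1$ must hold, which may require $d\ge 1/(2b)$ or a separate treatment with $k_\phi=1$, and $t_0\in[0,1-k_\phi b^{-n_\phi}]$ must hold. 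Both should follow from periodicity and the freedom in $J$.
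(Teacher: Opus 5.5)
Your argument is correct, but it takes a genuinely different route from the paper's. The paper argues by contradiction. Assuming every $b$-adic secant of every $P_n$ has slope at most $\kappa_\phi$, it shows $|P_n(x_2)-P_n(x_1)|\le\kappa_\phi(x_2-x_1)+\operatorname{Lip}(P_n)b^{-n}$. Using periodicity (going around $[0,1]$ in both directions), it gets $\operatorname{osc}P_n\le\kappa_\phi/2+O(a^n)$. It then transfers this to $\phi$ via the identity $\phi(t)=P_n(t)-aP_n(bt)+a^n\phi(b^nt)$, which costs the factor $1+a$. So in the paper the $1/2$ comes from periodicity and the $1+a$ from this two-term identity. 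In your proof the $1/2$ comes from folding $d$ into $[0,1/2]$, and the $a/2$ from the rounding $k_\phi/b\approx d$.

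The paper's route needs no discretization bookkeeping, but it is purely existential. Yours is constructive and uses the same mechanism as Lemma~\ref{Lem:phi-L-level-uniform-good-interval-estimate}: one dominant block against a $\kappa_\phi$-Lipschitz lower-frequency part. It also gives more. If you use $\operatorname{Lip}(P_J)\le\kappa_\phi((ab)^J-1)$ rather than $\kappa_\phi(ab)^J$, the $O((ab)^{-J})$ error cancels exactly against $\kappa_\phi h$. The requirement then becomes $\operatorname{osc}\phi>\kappa_\phi\bigl((ab-1)|k_\phi/b-d|+k_\phi/b\bigr)$, independent of $J$. Hence $n_\phi=2$ always works; $J\ge1$ is needed only to fit $t_0$ into $[0,1-h]$. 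The resulting condition is implied by, and can be strictly weaker than, the hypothesis of the proposition; for instance, $\operatorname{osc}\phi>\kappa_\phi(d+a/2)$ suffices when $d\ge 1/(2b)$.

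On the obstacles you flag: the rounding loss does not double. Your own computation already shows that the losses $\operatorname{Lip}(\phi)/(2b)$ and $\kappa_\phi/(2b)$ add up to exactly $\kappa_\phi a/2$, so the round-down fallback is unnecessary. It can in fact be worse: it costs up to $\kappa_\phi(d+a-2/b)$, which exceeds $\kappa_\phi(d+a/2)$ once $ab>4$, and it may produce $k_\phi=0$.

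In the edge case $d<1/(2b)$ with $k_\phi=1$, your top-term bound with loss $\operatorname{Lip}(\phi)/(2b)$ does not literally hold, because the rounding error is $1/b-d$. The argument still closes, since the total cost is $\kappa_\phi\bigl((ab-1)(1/b-d)+1/b\bigr)=\kappa_\phi\bigl(a-(ab-1)d\bigr)\le\kappa_\phi a<\kappa_\phi(1+a)/2$.

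Finally, the data are admissible: $d>0$ because $\operatorname{osc}\phi>0$, and $k_\phi\le(b+1)/2<b\le b^{n_\phi}$.
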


\begin{proof}
Recall that
$$
\kappa_\phi= \frac{\operatorname{Lip (\phi)}}{ab-1}.
$$The strict inequality in
\eqref{eq:phi-oscillation-PD-condition} implies that
$\operatorname{osc}\phi>0$. Thus, $\phi$ is nonconstant,
$\operatorname{Lip}(\phi)>0$ and $\kappa_\phi>0$.

Assume towards contradiction that $\phi$ does not satisfy any
$b$-adic positive difference condition. Then, for every integer
$n\geq1$, every integer $1\leq k<b^n$, and every
$t\in[0,1-kb^{-n}]$, we have
$$
\left|P_n(t+kb^{-n})-P_n(t)\right|
\leq
\kappa_\phi kb^{-n}.
$$
Fix $n\in \N$, and
$0\leq x_1\leq x_2\leq1$ with $x_2-x_1<1$. Suppose $\left\lfloor b^n(x_2-x_1)\right\rfloor\geq 1$. Since $\left\lfloor b^n(x_2-x_1)\right\rfloor< b^n$ and $x_1\in [0,1-\left\lfloor b^n(x_2-x_1)\right\rfloor b^{-n}]$, by applying the above inequality we have
\begin{align*}
|P_n(x_2)-P_n(x_1)|
&\leq
\left|P_n(x_2)
-
P_n\left(x_1+\left\lfloor b^n(x_2-x_1)\right\rfloor b^{-n}\right)\right|\\
&\quad+
\left|P_n\left(x_1+\left\lfloor b^n(x_2-x_1)\right\rfloor b^{-n}\right)
-
P_n(x_1)\right|\\
&\leq
\operatorname{Lip}(P_n)
\left(x_2-x_1-\left\lfloor b^n(x_2-x_1)\right\rfloor b^{-n}\right)
+
\kappa_\phi\left\lfloor b^n(x_2-x_1)\right\rfloor b^{-n}.
\end{align*}
If $\left\lfloor b^n(x_2-x_1)\right\rfloor=0$, the second difference
in the first inequality is zero, so the same estimate follows
directly from the Lipschitz continuity of $P_n$. Therefore, in either case, by definition of the floor function the above implies
\begin{equation}\label{eq: osc Lip PD upper bound}
    |P_n(x_2)-P_n(x_1)|\leq
\kappa_\phi(x_2-x_1)
+
\operatorname{Lip}(P_n)b^{-n}.
\end{equation} Note that $n, x_1, x_2$ in the above estimate are arbitrary.

If $P_n$ is constant, then
$$
\operatorname{osc}P_n
\leq
\frac{\kappa_\phi}{2}
+
\frac{3\operatorname{Lip}(P_n)}{2b^n}
$$
holds trivially. Otherwise, since $P_n$ is continuous and
$1$-periodic, there are $x,y\in[0,1]$ with
$0\leq x<y\leq1$ and $0<y-x<1$ such that
$$
|P_n(y)-P_n(x)|
=
\operatorname{osc}P_n.
$$
Applying \eqref{eq: osc Lip PD upper bound} for $x_1=x$, and  $x_2=y$ gives
\begin{equation}\label{eq: adding oscPn propos PD 1}
    \operatorname{osc}P_n
\leq
\kappa_\phi(y-x)
+
\operatorname{Lip}(P_n)b^{-n}.
\end{equation}
On the other hand, using $P_n(0)=P_n(1)$ and applying \eqref{eq: osc Lip PD upper bound} separately for $x_1=y,x_2=1$ and for $x_1=0, x_2=x$, we obtain
\begin{align*}
\operatorname{osc}P_n
&=
|P_n(y)-P_n(x)|\\
&\leq
|P_n(y)-P_n(1)|
+
|P_n(0)-P_n(x)|\\
&\leq
\kappa_\phi(1-y+x)
+
2\operatorname{Lip}(P_n)b^{-n}.
\end{align*}
Adding \eqref{eq: adding oscPn propos PD 1} with the above yields
\begin{equation}\label{eq: eq: adding oscPn propos PD 2}
    \operatorname{osc}P_n
\leq
\frac{\kappa_\phi}{2}
+
\frac{3\operatorname{Lip}(P_n)}{2b^n}.
\end{equation}
Moreover, recall that
\begin{align*}
\operatorname{Lip}(P_n)
&\leq
\operatorname{Lip}(\phi)
\sum_{j=0}^{n-1}(ab)^j\\
&=
\kappa_\phi\big((ab)^n-1\big),
\end{align*}
and hence
$$
\frac{\operatorname{Lip}(P_n)}{b^n}
\leq
\kappa_\phi\left(a^n-b^{-n}\right).
$$
It follows by \eqref{eq: eq: adding oscPn propos PD 2} and the above that
\begin{equation}
\label{eq:upper-oscillation-Pn-PD}
\operatorname{osc}P_n
\leq
\frac{\kappa_\phi}{2}
+
\frac{3\kappa_\phi}{2}\left(a^n-b^{-n}\right).
\end{equation}

For every $t\in\R$, we also have
$$
\phi(t)
=
P_n(t)-aP_n(bt)+a^n\phi(b^nt).
$$
Since $b\in\N$ and both $\phi$ and $P_n$ are $1$-periodic, by the definition of
oscillation and triangle inequality the above gives
$$
\operatorname{osc}\phi
\leq
(1+a)\operatorname{osc}P_n
+
a^n\operatorname{osc}\phi,
$$
which implies
$$
\frac{1-a^n}{1+a}\operatorname{osc}\phi
\leq
\operatorname{osc}P_n.
$$
Combining this with
\eqref{eq:upper-oscillation-Pn-PD}, we obtain
$$
\frac{1-a^n}{1+a}\operatorname{osc}\phi
\leq
\frac{\kappa_\phi}{2}
+
\frac{3\kappa_\phi}{2}\left(a^n-b^{-n}\right).
$$
Since $n\in \N$ is arbitrary, letting $n\to\infty$ gives
$$
2\operatorname{osc}\phi
\leq
\kappa_\phi(1+a),
$$
contradicting \eqref{eq:phi-oscillation-PD-condition}. Therefore
$\phi$ satisfies a $b$-adic (PD) condition.
\end{proof}

\begin{Rem}
    Theorems \ref{Thm:Aikawa-Weierstrass} and \ref{Thm: Takagi Conj} also follow by an application of Proposition \ref{Prop:phi-oscillation-implies-PD}, although for the slightly restricted ranges $ab\geq \pi+1$ and $ab\geq 3$.
\end{Rem}


\section{Final remarks}\label{sec: Final Rem}

A very natural question following Theorem~\ref{Thm:Aikawa-phi-Weierstrass} is whether the (PD) condition assumption can be improved. Namely, it is clear that this assumption is not a characterization of the generating functions $\phi$ that yield Weierstrass type functions whose graph has Assouad dimension less than $2$. An almost trivial example would be to choose any constant generating function $\phi$, which would not satisfy any (PD) condition, and yet the corresponding Weierstrass function has a graph of Assouad dimension $1$. It would be interesting to establish a clear dichotomy on generating functions $\phi$ that yield to Weierstrass functions with graph of Assouad dimension either $2$ or less than $2$, perhaps in the spirit of the result of Ren-Shen \cite{RenShenInventWeier}.

Recall that the formula for the Assouad dimension of $G(T_{a,b})$ proved in \cite{TakagiDimA} is for the case $a\in (1/2,1)$ and $b=2$, which is a special case of $ab<2$. On the other hand, Theorem~\ref{Thm: Takagi Conj} shows that $\dim_A G(T_{a,b})<2$ for $a\in (0,1)$, $b\in \N$ with $ab>1$. Can the formula from \cite{TakagiDimA} be proved for the general case of Theorem~\ref{Thm: Takagi Conj}? Furthermore, Yu's result in \cite{Yu_Takagi} shows that the Assouad dimension of $G(T_{a,b})$ is strictly greater than the upper box dimension, under certain assumptions on $a\in (0,1)$ and $b\in \N$, which include the condition $ab\leq 2$. Can it be shown that $\dim_A G(T_{a,b})$ is strictly greater than the upper box dimension of $G(T_{a,b})$ for all $a\in (0,1)$, $b\in \N$ with $ab>1$?

While Theorem \ref{Thm:Aikawa-Weierstrass QUANT} establishes a quantitative upper bound on $\dim_A G(W_{a,b}^\phi)$ away from $2$, this bound is unlikely to be sharp. This can perhaps already be seen by Theorem~\ref{Thm:explicit-cosine-Assouad-bound} in the classical Weierstrass functions case, and similarly for the Takagi functions and Theorem~\ref{Thm:explicit-Takagi-Assouad-bound}. It would be interesting to achieve a sharper upper bound than $2-\theta_\phi$, at least in the case of the classical Weierstrass function with $\phi(x)=\cos(2\pi x)$. Such an approach would need to employ explicit properties of the cosine function, for instance by improving the upper bound in Lemma~\ref{Lem:phi-L-level-uniform-good-interval-estimate} directly. Such an improved bound would afterwards result in sharper estimates by following exactly the same strategy as in Sections \ref{sec: counting intervals} and \ref{sec: dimA general W}. It should  be noted that attention regarding these questions is restricted to the non-smooth parameter range $ab>1$, including as stated in \cite{FraserBook}, since for $0<ab\leq 1$ the fractal dimension notions of the graph are all equal to $1$ (see for instance \cite{SmoothTakagi}).


One of the most important implications of Theorem \ref{Thm:Aikawa-phi-Weierstrass} is the porosity property that the graph is proved to carry in Corollary \ref{cor: graph porous} due to the work of Luukkainen \cite{Luukkainen_porous}. While this is a geometric property in nature, it leads to various analytic applications for the graph of Weierstrass functions. We devote the rest of the section to such applications that follow directly from Theorem \ref{Thm:Aikawa-Weierstrass QUANT}, Corollary \ref{cor: graph porous} and the work of other authors on Muckenhoupt weights and Hardy inequalities. 

A particularly useful application is on distance weights used in harmonic analysis. In particular, porosity notions have been very closely connected to the Muckenhoupt properties of such weights. We refer to \cite{Anderson_Mudarra_Lehr_Vah} and the references therein for more details on the importance of such weights. Given $a\in (0,1)$ and an integer $b> 1/a$, set 
$\phi(t)=\cos(2\pi t)$ and $G=G(W^\phi_{a,b})$ 
for the rest of the section to restrict our attention to classical Weierstrass functions, although the corresponding results hold for generalized Weierstrass functions under the assumptions of Theorem \ref{Thm:Aikawa-phi-Weierstrass}. For any $p\in (1,\infty)$, there is some $\alpha>0$ with 
$$
(1-p)(2-\dim_A G)<\alpha<2-\dim_A G
$$ such that the weight $\dist(z,G)^{-\alpha}$ lies in the Muckenhoupt class $A_p$. In particular, due to a result of Dyda, Ihnatsyeva, Lehrb\"ack, Tuominen, and V\"ah\"akangas we have the following characterization.

\begin{Cor}
\label{cor:Muckenhoupt-distance-weights}
Let $1<p<\infty$, $\alpha\in\mathbb R$, and set $w=\dist(z,G)^{-\alpha}$. Then we have $w\in A_p$, i.e.,
$$
 \sup_B
\left(\fint_Bw\,dz\right)
\left(\fint_Bw^{-1/(p-1)}\,dz\right)^{p-1}
<\infty,
$$ if, and only if,
$$
(1-p)(2-\dim_A G)<\alpha<2-\dim_A G.
$$
\end{Cor}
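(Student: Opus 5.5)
The plan is to deduce the statement from the known characterization of Muckenhoupt distance weights in terms of the Assouad dimension and porosity. The relevant result, due to Dyda, Ihnatsyeva, Lehrb\"ack, Tuominen, and V\"ah\"akangas, says the following. Let $E\subset\R^d$ be a nonempty closed set that is porous, equivalently $\dim_A E<d$ by Luukkainen's theorem. Then for $1<p<\infty$ the weight $\dist(z,E)^{-\alpha}$ lies in $A_p$ if and only if
$$
(1-p)(d-\dim_A E)<\alpha<d-\dim_A E.
$$
More precisely, I would use the version in which the upper threshold is $d-\dim_A E$ and the lower threshold is $(1-p)(d-\dim_A E)$. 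That lower threshold comes from the duality $w\in A_p\iff w^{-1/(p-1)}\in A_{p'}$ applied to the exponent $\alpha/(p-1)$.

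\textbf{Step 1: check the hypotheses.} Take $d=2$ and $E=G$. The set $G$ is compact, being the graph of a continuous function on $[0,1]$, and it is nonempty. By Theorem~\ref{Thm:Aikawa-Weierstrass}, or by its quantitative form Theorem~\ref{Thm:Aikawa-Weierstrass QUANT} applied with the (PD) data established in its proof, we have $\dim_A G\le 2-\theta_\phi<2$. Corollary~\ref{cor: graph porous} then gives that $G$ is porous. So the cited characterization applies.

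\textbf{Step 2: the sufficiency direction.} Fix $\alpha$ with $0\le\alpha<2-\dim_A G$. The Aikawa condition from Theorem~\ref{Thm: Aikawa-Assouad-lehr}, applied with exponent $\alpha$, gives
$$
\fint_B w\lesssim r^{-\alpha}
$$
on balls $B$ of radius $r$ centered on $G$.

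Next, porosity gives the reverse bound: a fixed proportion of each ball lies at distance $\gtrsim r$ from $G$. This yields
$$
\fint_B w^{-1/(p-1)}\lesssim r^{\alpha/(p-1)},
$$
because for nonnegative $\alpha$ the function $w^{-1/(p-1)}$ is a positive power of the distance, which is at most $r$. The product of the two averages is then bounded.

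Balls that are far from $G$ relative to their radius are handled directly, since $\dist(\cdot,G)$ is comparable to a constant on them. Negative $\alpha$ is handled by the duality noted above.

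\textbf{Step 3: the necessity direction.} This follows from the characterization once $\dim_A G<2$ is known. If $\alpha\ge 2-\dim_A G$, the Aikawa integral fails to be locally comparable to $r^{-\alpha}$ times the volume of the ball, and the $A_p$ constant blows up. The lower range is symmetric.

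\textbf{Main obstacle.} The only real obstacle is making sure the cited theorem's hypotheses hold exactly. In particular, the endpoints must be expressed through $\dim_A G$ itself rather than through the bound $2-\theta_\phi$. Since that theorem is stated in terms of the Assouad (or Aikawa) codimension of a porous closed set, I would simply invoke it, using Corollary~\ref{cor: graph porous} together with Theorem~\ref{Thm:Aikawa-Weierstrass}.
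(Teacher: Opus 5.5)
Your proposal is correct and follows the paper's own argument. The paper simply combines the definition of $A_p$, Corollary~3.8 of Dyda--Ihnatsyeva--Lehrb\"ack--Tuominen--V\"ah\"akangas, and Theorem~\ref{Thm:Aikawa-Weierstrass} (which gives $\dim_A G<2$, i.e.\ porosity of $G$), exactly as in your Step~1. Your Steps~2--3 are heuristic and not needed once you make the citation. For example, for $\alpha\ge0$ the bound $\fint_B w^{-1/(p-1)}\lesssim r^{\alpha/(p-1)}$ on balls of radius $r$ centred on $G$ is trivial from $\dist(z,G)\le r$, and porosity is really used only for necessity. Likewise, excluding the endpoint $\alpha=2-\dim_A G$ requires a self-improvement argument (e.g.\ reverse H\"older), which is built into the cited result.
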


\begin{proof}
The proof follows by the definition of  Muckenhoupt weights $A_p$, Corollary~3.8 of
\cite{DydaIhnatsyevaLehrbackTuominenVahakangas},  and Theorem \ref{Thm:Aikawa-Weierstrass}.
\end{proof}
It would be interesting to attempt and sharpen the upper bound in Theorem \ref{Thm:Aikawa-Weierstrass QUANT} by proving Muckenhoupt properties for appropriate exponents of the distance function, as in the above characterization. An additional analytic characterization of the Assouad (co-)dimension of $G$ is in terms of Triebel-Lizorkin Hardy inequalities studied by Ihnatsyeva and V\"ah\"akangas
\cite{Ihnatsyeva_Vahakangas_Hardy_II}. It would be interesting to determine whether sharper bounds can be achieved for $\dim_A G$ through this route of Hardy inequalities. For the exact definition of the
Triebel--Lizorkin space $F^s_{p,q}(\R^2)$ and the corresponding  norm $\lVert f\rVert_{F^s_{p,q}(\R^2)}$ we refer to \cite{Ihnatsyeva_Vahakangas_Hardy_II} and the references therein.

\begin{Cor}
\label{cor:Triebel-Lizorkin-Hardy-Weierstrass}
Let $1<p<\infty$, $1\leq q<\infty$, and $0<s<2/p$.  The following
statements are equivalent:
\begin{enumerate}
\item $\dim_A G<2-sp$;
\item there is $C>0$ such that
$$
\left(
\int_{\R^2\setminus G}
\frac{|f(z)|^p}{\dist(z,G)^{sp}}\,dz
\right)^{1/p}
\leq
C\lVert f\rVert_{F^s_{p,q}(\R^2)}
$$
for every $f\in F^s_{p,q}(\R^2)$.
\end{enumerate}
In particular, (2) holds for all $sp<\theta_\theta$, where $\theta_\phi$ is as in \eqref{eq:phi-def-sigma-theta} for $\phi(t)=\cos(2\pi t)$.
\end{Cor}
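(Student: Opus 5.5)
The plan is to deduce Corollary~\ref{cor:Triebel-Lizorkin-Hardy-Weierstrass} from the characterization of Ihnatsyeva and V\"ah\"akangas in \cite{Ihnatsyeva_Vahakangas_Hardy_II}, combined with Theorem~\ref{Thm:Aikawa-Weierstrass QUANT}. Their result says the following. Let $E\subset\R^n$ be closed, and let $1<p<\infty$, $1\le q<\infty$, $0<s<n/p$. Then the Triebel--Lizorkin Hardy inequality
$$\int_{\R^n\setminus E}|f(z)|^p\dist(z,E)^{-sp}\,dz\le C\lVert f\rVert^p_{F^s_{p,q}(\R^n)}$$
holds if and only if the Assouad codimension of $E$ satisfies $\operatorname{co\,dim}_A E>sp$. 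In $\R^2$ the Assouad codimension of a set equals $2-\dim_A$ of that set. So the first step is to state this equivalence with $n=2$ and $E=G$, after checking the standing hypotheses.

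\textbf{Hypotheses.} $G$ is compact, being the graph of a continuous function on $[0,1]$, so it is closed and nonempty. Their theorem may also require $E$ to be porous, or at least that $\mathcal L^2(E)=0$. Both hold: $G$ is porous by Corollary~\ref{cor: graph porous}, and a graph of a continuous function has planar measure zero. The range $0<s<2/p$ matches their assumption $sp<n$ with $n=2$.

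\textbf{Main obstacle.} I expect the hardest step to be aligning conventions. Their statement may be phrased in terms of the Aikawa codimension, or in terms of $\dim_A E<n-sp$. If it is phrased with Aikawa codimension, I convert using Theorem~\ref{Thm: Aikawa-Assouad-lehr}, which gives $\dim_{\mathrm{Ai}}=\dim_A$. I also need to check that the strict inequality is on the correct side. Since the Assouad dimension is defined as an infimum, the condition should be the strict inequality $\dim_A G<2-sp$ and not $\le$. If their theorem is formulated for unbounded sets, or with a global codimension using all radii $R>0$, I reduce the bounded compact case $G$ to their setting as follows. For $R\ge\diam G$ the ball $B(z_0,R)$ contains all of $G$, so the scale-invariant Aikawa estimate there follows from an estimate at radius comparable to $\diam G$ together with the trivial bound $\dist(z,G)\approx R$ far from $G$. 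This is the same argument as the large-$R$ case in the proof of Theorem~\ref{Thm:Aikawa-Weierstrass QUANT}.

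\textbf{The ``in particular'' clause.} Here $\theta_\theta$ in the statement should read $\theta_\phi$. By Theorem~\ref{Thm:Aikawa-Weierstrass QUANT} applied to $\phi=\cos(2\pi\cdot)$, we have $\dim_A G\le 2-\theta_\phi$; the (PD) condition for this $\phi$ was verified in the proof of Theorem~\ref{Thm:Aikawa-Weierstrass}. Hence if $sp<\theta_\phi$, then $\dim_A G\le 2-\theta_\phi<2-sp$, so condition (1) holds. Since $\theta_\phi<1<2$, this is compatible with $0<s<2/p$. The equivalence then gives (2).
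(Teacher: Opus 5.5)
Your proposal is correct and is the same argument as the paper's, which simply applies \cite[Theorem 1.2]{Ihnatsyeva_Vahakangas_Hardy_II} to the domain $\R^2\setminus G$ and uses the bound $\dim_A G\le 2-\theta_\phi$ from Theorem~\ref{Thm:Aikawa-Weierstrass QUANT} for the final clause. One minor point: your contingency reduction for radii $R\ge\diam G$ is not needed, since for a bounded set those scales do not affect $\dim_A$. It is also not literally the large-$R$ argument of Theorem~\ref{Thm:Aikawa-Weierstrass QUANT}, which relies on $R<\diam G$; if such a reduction were needed, you would instead use $\dist(z,G)\ge |z-z_0|-\diam G$.
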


\begin{proof}
    The proof follows by an application of \cite[Theorem 1.2]{Ihnatsyeva_Vahakangas_Hardy_II} on the domain $\R^2 \setminus G$ and the estimate in Theorem \ref{Thm:Aikawa-Weierstrass QUANT}.
\end{proof}

\end{document}